\numberwithin{equation}{section}
\numberwithin{figure}{section}
\theoremstyle{plain}
\newtheorem{thm}{\protect\theoremname}
  \theoremstyle{plain}
  \newtheorem{prop}[thm]{\protect\propositionname}
  \theoremstyle{plain}
  \newtheorem{lem}[thm]{\protect\lemmaname}
    \theoremstyle{plain}
  \newtheorem{rem}[thm]{\protect\remarkname}
  \theoremstyle{plain}
\providecommand{\tabularnewline}{\\}
\providecommand{\lemmaname}{Lemma}
\providecommand{\propositionname}{Proposition}
\providecommand{\theoremname}{Theorem}
\providecommand{\corollaryname}{Corollary}
\newcommand{\abs}[1]{\ensuremath{|#1|}}
\newcommand{\Abs}[1]{\ensuremath{\left|#1\right|}}
\newcommand{\norm}[2]{\ensuremath{|\!|#1|\!|_{#2}}}
\newcommand{\Norm}[2]{\ensuremath{\left|\!\left|#1\right|\!\right|_{#2}}}
\newcommand{\braket}[2]{\langle #1 | #2 \rangle}
\newcommand{\Braket}[2]{\left\langle #1 \Big| #2 \right\rangle}
\renewcommand{\d}[1]{\ensuremath{\textnormal{d}#1}}
\newcommand{\cC}{\mathcal{C}}
\newcommand{\cD}{\mathcal{D}}
\newcommand{\cM}{\mathcal{M}}
\newcommand{\cN}{\mathcal{N}}
\newcommand{\cO}{\mathcal{O}}
\newcommand{\cP}{\mathcal{P}}
\newcommand{\cS}{\mathcal{S}}
\providecommand{\lemmaname}{Lemma}
  \providecommand{\propositionname}{Proposition}
  \providecommand{\remarkname}{Remark}
\providecommand{\theoremname}{Theorem}
\providecommand{\lemmaname}{Lemma}
  \providecommand{\propositionname}{Proposition}
\providecommand{\theoremname}{Theorem}
\providecommand{\theoremname}{Theorem}
\begin{document}

\title{A constructive approach to Schäffer's conjecture}

\author{Oleg Szehr}

\address{University of Vienna, Faculty of Mathematics, Oskar-Morgenstern-Platz
1, 1090 Vienna, Austria.}

\email{oleg.szehr@posteo.de}

\author{Rachid Zarouf}

\address{Institut de Mathématiques de Marseille, UMR 7373, Aix-Marseille Université,
39 rue F. Joliot Curie, 13453 Marseille Cedex 13, France.}

\email{rachid.zarouf@univ-amu.fr}

\address{Department of Mathematics and Mechanics, Saint Petersburg State University,
28, Universitetski pr., St. Petersburg, 198504, Russia.}

\keywords{Schäffer's conjecture, condition numbers, resolvent estimates \\
 {2010 Mathematics Subject Classification: Primary: 15A60; Secondary:
30J10}}

\thanks{The work is supported by Russian Science Foundation grant 14-41-00010.}
\begin{abstract}
J.J.~Schäffer proved that for {any} induced matrix norm and {any} invertible $T=T(n)$ the inequality
\[
\abs{\det{T}}\Norm{T^{-1}}{}\leq\mathcal{S}\Norm{T}{}^{n-1}
\]
holds with $\cS=\cS(n)\leq\sqrt{en}$. He conjectured that the best $\cS$ was actually bounded. This was rebutted by Gluskin-Meyer-Pajor and subsequent contributions by J.~Bourgain and H.~Queffelec that successively improved lower estimates on $\cS$.
These articles rely on a link to the theory of power sums of complex numbers. A probabilistic or number theoretic analysis of such inequalities is employed to prove the existence of $T$ with growing $\cS$ but the explicit construction of such $T$ remains an open
task. In this article we propose a constructive approach to Schäffer's
conjecture that is not related to power sum theory. As a consequence we present an explicit sequence of Toeplitz matrices with singleton spectrum $\{\lambda\}\subset\mathbb{D}-\{0\}$
such that $\cS\geq c(\lambda)\sqrt{n}$. Our framework naturally extends
to provide lower estimates on the resolvent $\Norm{(\zeta-T)^{-1}}{}$ when $\zeta\neq0$. We also obtain new upper estimates on the resolvent when the spectrum is given. This yields new upper bounds on $\Norm{T^{-1}}{}$ in terms of the eigenvalues of $T$ which slightly refine Schäffer's original estimate.\end{abstract}
\maketitle

\section{\label{sub:Introduction}Introduction}

It is a classical task in matrix analysis and operator theory to find
good estimates for inverses. A well-established line of research related
to this topic was initiated by studies of B.~L.~van der Waerden~\cite{SJ}
and J.~J.~Schäffer~\cite{SJ}: let $\cM_{n}$ be the set of complex
$n\times n$ matrices acting on the Banach space $\mathbb{C}^{n}$
equipped with a norm. What is the best $\mathcal{S}=\mathcal{S}(n)$
so that 
\[
\abs{\det{T}}\Norm{T^{-1}}{}\leq\mathcal{S}\Norm{T}{}{}^{n-1}
\]
holds for \emph{any} invertible $T\in\cM_{n}$ and \emph{any} induced
norm $\Norm{\cdot}{}=\Norm{\cdot}{\mathbb{C}^{n}\rightarrow\mathbb{C}^{n}}$?
Schäffer \cite[Theorem 3.8]{SJ} proved that 
\[
\mathcal{S}\leq\sqrt{en},
\]
and he conjectured that $\mathcal{S}$ is bounded. This claim was
refuted by lower estimates on $\mathcal{S}$ derived by E.~Gluskin,
M.~Meyer and A.~Pajor~\cite{GMP} as well as J.~Bourgain~\cite{GMP}. The currently best lower estimate is due to H.~Queffelec~\cite{QH},
where it is shown that
\[
\cS\geq\sqrt{n}(1-\cO(1/n)).
\]
The common point in the mentioned lower bounds is that they rely on an inequality of J.~Bourgain that relates Schäffer's
problem to the theory of sums of powers of complex numbers. For $\cS$ to grow the eigenvalues of $T$ should satisfy
a Turán-type power sum inequality. The construction of explicit solutions
to such inequalities appears to be a well-studied but open problem
in number theory~\cite{TP,MH,ER,AJ1,AJ2}. Accordingly the construction
of explicit sequences of matrices with growing $\cS$ remained open,
see \cite[p.~2 lines 25-26]{GMP} and \cite[final Remark p.~158]{QH}. In this article we introduce an entirely constructive approach to
Schäffer's conjecture that avoids the hard slog through power sum
theory. While previous publications focused on demonstrating the existence of \emph{spectra} (without reference to explicit matrix representations) in this article we systematically determine the optimal class of operators for the study of $\cS$. Computing explicit matrix representations we
present a sequence of Toeplitz matrices $T_{\lambda}\in\cM_{n}$ with \emph{singleton
spectrum} $\{\lambda\}\in\mathbb{D}-\{0\}$ such that
\[
\abs{\lambda}^{n}\Norm{T_{\lambda}^{-1}}{}\geq c(\lambda)\sqrt{n}\Norm{T_{\lambda}}{}{}^{n-1}.
\]
We also study upper estimates,  where we leverage on an approach established in~\cite{NFBK,NN1} to obtain new upper bounds on the resolvent $\Norm{(\zeta-T)^{-1}}{}$ of a matrix $T$ with given spectrum, with $\abs{\zeta}\leq\Norm{T}{}$. This includes the case $\zeta=0$ (Schäffer's bound) and the well-studied Davies-Simon type estimates for $\abs{\zeta}=\Norm{T}{}$~\cite{DS,NN1,ZR,SO1}. For $\zeta=0$ we derive new upper estimates on $\Norm{T^{-1}}{}$ in terms of the eigenvalues of $T$ which slightly refine Schäffer's original estimate. 

\section{Gluskin-Meyer-Pajor's approach to Schäffer's problem and Bourgain's
trick}

Gluskin-Meyer-Pajor~\cite{GMP} gave an analytic expression for $\mathcal{S}$
in terms of a \lq\lq max-min-type\rq\rq{} optimization problem,
which we shall discuss in detail in the main body of the paper. Speaking
briefly, $\mathcal{S}$ can be written as 
\begin{align}
\mathcal{S}=\sup_{\left(\lambda_{1},\dots,\lambda_{n}\right)\in\mathbb{D}^{n}}\phi\left(\lambda_{1},\dots,\lambda_{n}\right),\label{GMP_lemma}
\end{align}
where $\mathbb{D}$ is the open unit disk and $\phi$ is given by
\begin{align*}
\phi\left(\lambda_{1},\dots,\lambda_{n}\right) & :=\inf\left\{ \sum_{k=1}^{\infty}\abs{a_{k}}\:\Big|\: f(z)=\prod_{i=1}^{n}\lambda_{i}+\sum_{k=1}^{\infty}a_{k}z^{k},\: f(\lambda_{i})=0,\: i=1\dots n\right\}.
\end{align*}
As
we will discuss later $\left(\lambda_{1},\dots,\lambda_{n}\right)$
can be interpreted as the spectrum of a sequence $T=T(n)\in\cM_n$ with $\phi\left(\lambda_{1},\dots,\lambda_{n}\right)=\norm{(\det{T})\cdot{T}^{-1}}{}$.
Any choice of sequence $\left(\lambda_{1},\dots,\lambda_{n}\right)$
provides a lower bound to the supremum in~\eqref{GMP_lemma}.
Thus to show that $\mathcal{S}$ grows unboundedly Gluskin-Meyer-Pajor
employed a probabilistic method establishing the existence of a sequence
$\left(\lambda_{1},\dots,\lambda_{n}\right)$ with 
\[
\phi(\lambda_{1},....,\lambda_{n})\geq c_{1}\sqrt{\frac{n}{\log n}}\frac{1}{\log\log n},\qquad c_{1}>0.
\]
The argument was refined by a short and elegant computation of J.~Bourgain,
see~\cite[proof of Theorem 5]{GMP}, that yields 
\[
\phi(\lambda_{1},....,\lambda_{n})\geq\frac{n\prod_{i=1}^{n}\abs{\lambda_{i}}}{\max_{k\geq1}\Abs{\sum_{i=1}^{n}\lambda_{i}^{k}}}-\prod_{i=1}^{n}\abs{\lambda_{i}}.
\]
The key to lower bound this expression lies in finding
$(\lambda_{1},...,\lambda_{n})$ such that $\max_{k\geq1}\Abs{\sum_{i=1}^{n}\lambda_{i}^{k}}$
remains bounded by $\sqrt{n}$. In essence this is Turán's tenth problem, which
to date has no constructive solution~\cite{AJ1,TP}. Moreover $(\lambda_{1},...,\lambda_{n})$
must depend on $n$ or else $\prod_{i=1}^{n}\abs{\lambda_{i}}$ would
decay exponentially. Bourgain established
existence of suitable $(\lambda_{1},...,\lambda_{n})$ by a probabilistic argument and thereby proved that
\[
\phi(\lambda_{1},....,\lambda_{n})\geq c_{2}\sqrt{\frac{n}{\log n}},\qquad c_{2}>0.
\]
The currently strongest estimates are due to H.~Queffelec~\cite{QH}
and build on the above inequality by Bourgain. Queffelec uses a number
theoretic method of H.~Montgomery~\cite[Example 6, p.~101]{MH}
to approach the power sum problem: first he shows that $(\lambda_{1},....,\lambda_{n})$
can be chosen so that 
\[
\phi(\lambda_{1},....,\lambda_{n})\geq\sqrt{\frac{n}{e}},\qquad n=p-1\:\:\textnormal{and}\:\: p\:\:\textnormal{prime}.
\]
By an application of Bertrand's postulate, saying that for each $n\geq2$
there exists a prime number $p$ with $n<p<2n$, he concludes
\[
\mathcal{S}\geq\sqrt{\frac{n}{2e}},\qquad\mathcal{S}\geq\sqrt{n}(1-\cO(1/n)).
\]
Gluskin-Meyer-Pajor explicitly regret \cite[p.~2 lines 25-26]{GMP} that they were
not able to construct an example of $\left(\lambda_{1},\dots,\lambda_{n}\right)$
for which $\phi(\lambda_{1},....,\lambda_{n})$ is growing. For $p$ prime and $n=p-1$ Montgomery's example $\left(\lambda_{1},\dots,\lambda_{p-1}\right)$,
on which \cite{QH} is footed, cannot be made explicit even assuming the generalized
Riemann hypothesis \cite[final Remark p.~158]{QH}. The main contribution
of this article may be viewed as a new approach to lower estimate $\phi(\lambda_{1},....,\lambda_{n})$
that is not related to Bourgain's estimate. As a consequence we

\begin{enumerate}
\item find that the trivial choice of fixed singleton spectrum $\lambda_{1}=\dots=\lambda_{n}=\lambda\in\mathbb{D}-\{0\}$
suffices to demonstrate that $\phi(\lambda,....,\lambda)$ grows like $\sqrt{n}$ and we circumvent Turán's problem; and we
\item provide the first explicit class of counter-examples to Schäffer's conjecture:
a sequence of invertible Toeplitz matrices $T_{\lambda}\in\cM_n$ with singleton
spectrum $\{\lambda\}$ and
$$\abs{\lambda}^{n}\norm{T_{\lambda}^{-1}}{}\geq c(\lambda)\sqrt{n}\Norm{T_{\lambda}}{}^{n-1},$$
see Theorem \ref{thm:Main_theo} for details. 
\end{enumerate}
Our trick to lower bound $\phi(\lambda_{1},....,\lambda_{n})$ is
so simple that we can present it already now.
\section{\label{sec:A-constructive-method}A constructive method to lower
estimate $\cS$}

Before going into the details of our construction we present our method
to lower bound $\phi$ in the most simple language. Together with
the results of~\cite{GMP} this yields the first entirely constructive
lower estimate on $\cS$. Let $Hol(\mathbb{D})$ be the set of holomorphic
functions on $\mathbb{D}$ and let $L^{2}(\partial\mathbb{D})$ be
the usual $L^{2}$ space of the boundary $\partial\mathbb{D}$ equipped
with the standard scalar product 
\[
\left\langle f,g\right\rangle :=\int_{-\pi}^{\pi}f(e^{i\varphi})\overline{g(e^{i\varphi})}\frac{\d\varphi}{2\pi},
\]
see~\cite{NN1} for details. The Wiener algebra is the subset of
$Hol(\mathbb{D})$ of absolutely convergent Taylor series, 
\begin{align*}
W:=\{f=\sum_{k\geq0}\hat{f}(k)z^{k}|\Norm{f}{W}:=\sum_{k\geq0}\abs{\hat{f}(k)}<\infty\}.
\end{align*}
With this notation we can write the Gluskin-Meyer-Pajor expression
for $\phi$ more concisely
\begin{align}
\phi\left(\lambda_{1},\dots,\lambda_{n}\right) & =\inf\left\{ \Norm{h}{W}-\abs{h(0)}\:\Big|\: h\in W,\: h(0)=\prod_{i=1}^{n}\lambda_{i},\: h(\lambda_{i})=0,\: i=1\dots n\right\} .\label{WienerNP}
\end{align}
For $f=\sum_{k}\hat{f}(k)z^{k}$, $g=\sum_{k}\hat{g}(k)z^{k}\in Hol(\mathbb{D})$
it is well known~\cite{NN1} that the $L^{2}(\partial\mathbb{D})$
scalar product can be written as 
\[
\left\langle f,\, g\right\rangle =\sum_{j\geq0}\hat{f}(j)\overline{\hat{g}(j)}.
\]
To lower bound $\phi$ we will apply Hölder's inequality in the form
\begin{align*}
\abs{\left\langle f,\, g\right\rangle }\leq\Norm{f}{l_{\infty}^{A}}\Norm{g}{W},
\end{align*}
where $\Norm{f}{l_{\infty}^{A}}:=\sup_{k\geq0}\abs{\hat{f}(k)}$.
Finally let $B(z)=\prod_{i=1}^{n}\frac{z-\lambda_{i}}{1-\bar{\lambda}_{i}z}$ denote a finite Blaschke product. $B$ maps the unit disk onto itself and satisfies~$\overline{B(z)}=\frac{1}{B(z)},\: z\in\partial\mathbb{D}$,~\cite{GJ}.
It is well known ~\cite{NN1} that for any $h\in W$ with $h(\lambda_{i})=0$
we have $\frac{h}{B}\in W$, which is sometimes called the \lq\lq
division property\rq\rq{} of the Wiener algebra.

We are ready to lower bound $\phi$. Let $h\in W$ with $h(\lambda_{i})=0$
and $h(0)=\prod_{i=1}^{n}\lambda_{i}$ and let $g=\frac{h}{B}\in W$.
We have 
\begin{align*}
\Braket{z^{2}h}{(1-z^{2})B} & =\Braket{(z^{2}-1)h}{B}\\
 & =\Braket{(z^{2}-1)g}{1}\\
 & =-g(0)=1.
\end{align*}
Applying Hölder's inequality and observing that $\Norm{z^{2}h}{W}=\Norm{h}{W}$
we conclude that 
\begin{align*}
1 & \leq\Norm{z^{2}h}{W}\Norm{(1-z^{2})B}{l_{\infty}^{A}}\\
 & =\Norm{h}{W}\Norm{(1-z^{2})B}{l_{\infty}^{A}}.
\end{align*}
It follows that any candidate function $h$ in the definition of $\phi$
satisfies 
\[
\Norm{h}{W}\geq\frac{1}{\Norm{(1-z^{2})B}{l_{\infty}^{A}}}
\]
and consequently 
\[
\phi\left(\lambda_{1},\dots,\lambda_{n}\right)\geq\frac{1}{\Norm{(1-z^{2})B}{l_{\infty}^{A}}}-\prod_{i=1}^{n}\abs{\lambda_{i}}.
\]
This is our analogue of Bourgain's lower estimate to $\phi\left(\lambda_{1},\dots,\lambda_{n}\right)$.
It relates Schäffer's problem to a well-defined question in asymptotic
analysis. The task is to determine the asymptotic $n$-dependence of the
Fourier coefficient of $(1-z^{2})B$ with slowest decay. We have developed the tools for this in a preliminary article~\cite{SZ2}. Of course
the question about the \lq\lq right\rq\rq{} eigenvalues remains
but as we will find the trivial choice $\lambda_{1}=...=\lambda_{n}=\lambda\in(0,1)$
already reaches $\Norm{(1-z^{2})B}{l_{\infty}^{A}}\leq K(\lambda)\frac{1}{\sqrt{n}}$.
\begin{lem}
\label{lem:Blaschke_power_lemma} Given $\lambda\in(0,1)$ and $B(z)=\left(\frac{z-\lambda}{1-\lambda z}\right)^{n}$ 
there is $K=K(\lambda)>0$ such that
\[
\Norm{(1-z^{2})B}{l_{\infty}^{A}}\leq K\frac{1}{\sqrt{n}}.
\]
\end{lem}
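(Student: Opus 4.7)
The plan is to estimate the Fourier coefficients $c_k := \widehat{(1-z^2)B}(k)$ by stationary phase. I would start by writing
\[
c_k = \frac{1}{2\pi}\int_{-\pi}^{\pi} (1 - e^{2i\varphi})\, e^{i(n\psi(\varphi) - k\varphi)}\, d\varphi,
\]
where $\psi(\varphi)$ is the argument of $b(e^{i\varphi})$ with $b(z)=(z-\lambda)/(1-\lambda z)$. A direct computation of the logarithmic derivative of $b$ along the unit circle gives
\[
\psi'(\varphi) = \frac{1-\lambda^2}{1 - 2\lambda \cos\varphi + \lambda^2},
\]
a positive $C^\infty$ function ranging over $[a,A]$ with $a=(1-\lambda)/(1+\lambda)$ attained at $\varphi=\pi$ and $A=(1+\lambda)/(1-\lambda)$ attained at $\varphi=0$; the second derivative $\psi''$ vanishes precisely at those two points.

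I would then split the analysis according to $\alpha := k/n$. For $\alpha$ outside a neighbourhood of $[a,A]$ the phase $n\psi-k\varphi$ has no critical point on $[-\pi,\pi]$, so a single integration by parts already yields $|c_k| = O(1/n)$. For $\alpha$ in a compact subset of the open interval $(a,A)$, the equation $\psi'(\pm\varphi_*)=\alpha$ determines a pair of simple critical points at which $\psi''$ is bounded away from zero, and the classical stationary phase formula gives $|c_k| \leq C\,|1-e^{2i\varphi_*}|/\sqrt{n\,|\psi''(\varphi_*)|} = O(n^{-1/2})$.

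The delicate regime is $\alpha$ close to $a$ or $A$, when $\varphi_*$ approaches $\pi$ or $0$ and $\psi''(\varphi_*) \to 0$. Near these endpoints the cubic normal form $\psi(\varphi) \approx \psi(0) + A\varphi + \tfrac{1}{6}\psi'''(0)\varphi^3$ would on its own only yield an Airy-type estimate of order $n^{-1/3}$, which is insufficient. The decisive saving is that the amplitude $(1-e^{2i\varphi})$ vanishes to first order at both $\varphi=0$ and $\varphi=\pi$ --- this is precisely why the factor $(1-z^2)$ was introduced in the construction. Rescaling $\varphi = n^{-1/3} u$ in the cubic model and using $(1-e^{2i\varphi}) = -2i\varphi + O(\varphi^2)$, an additional factor $n^{-1/3}$ is absorbed into the integrand, so the transition regime contributes $O(n^{-2/3})$, strictly better than the target $n^{-1/2}$.

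The main obstacle is to make this three-regime analysis uniform in $\alpha$ and to patch the stationary-phase and Airy estimates through the transitions at the endpoints of $[a,A]$. This is exactly what the explicit oscillatory-integral estimates developed in the preliminary article \cite{SZ2} are designed to handle; applying them and taking the supremum over $k$ of the three bounds, each $O(n^{-1/2})$ or better, yields $\Norm{(1-z^2)B}{l_\infty^A} \leq K(\lambda)\,n^{-1/2}$ with a constant $K(\lambda)$ depending only on $\lambda \in (0,1)$.
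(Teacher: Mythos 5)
Your proposal follows essentially the same approach as the paper's Section~\ref{sec:On-the-norm}: write the Fourier coefficient as an oscillatory integral, split into the regimes where $k/n$ is separated from, inside, or near the boundary of the critical interval $[\alpha_0,\alpha_0^{-1}]$ (your $[a,A]$), apply classical stationary phase in the bulk, and observe that the vanishing of the amplitude $1-z^2$ at the coalescence points $z=\pm1$ upgrades the naive Airy estimate $n^{-1/3}$ to $n^{-2/3}$, so the supremum over $k$ is governed by the bulk regime and is $O(n^{-1/2})$. The paper works with the complex phase $f_a(z)=\log(z^a(1-\lambda z)/(z-\lambda))$ and the uniform method of steepest descent of Chester--Friedman--Ursell, expanding $A_0, A_1$ explicitly against $Ai$, $Ai'$, but this is the same mechanism your rescaling $\varphi=n^{-1/3}u$ captures heuristically.
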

The asymptotic analysis for the proof of the lemma is conducted in Section~\ref{sec:On-the-norm}. We conclude this section with
a take-home formulation of our constructive lower estimate on $\cS$. It is an immediate consequence of Equation~\eqref{GMP_lemma}, our lower estimate on $\phi$ and Lemma~\ref{lem:Blaschke_power_lemma}.
\begin{prop}
\label{prop1}Given any fixed $\lambda\in\mathbb{D}-\{0\}$ we have
\[
\cS\geq\phi\left(\lambda,\dots,\lambda\right)\geq c(\lambda)\sqrt{n}
\]
where $c(\lambda)>0$ depends only on $\lambda$. \end{prop}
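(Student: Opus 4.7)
The plan is to assemble the three ingredients already laid out. First, restricting the supremum in~\eqref{GMP_lemma} to the diagonal $\lambda_{1}=\dots=\lambda_{n}=\lambda$ yields at once $\cS\geq\phi(\lambda,\dots,\lambda)$, so the task reduces to bounding $\phi(\lambda,\dots,\lambda)$ from below by a constant multiple of $\sqrt{n}$. To dispose of the argument $\theta$ of a general $\lambda=re^{i\theta}\in\mathbb{D}-\{0\}$, I would invoke a rotation argument: the map $h(z)\mapsto\hat h(z):=e^{-in\theta}h(e^{i\theta}z)$ is an isometry of the Wiener algebra, because it merely multiplies the $k$-th Taylor coefficient by a unimodular constant, and it bijects the admissible set for $\phi(\lambda,\dots,\lambda)$ with that for $\phi(r,\dots,r)$: Hermite interpolation conditions at $\lambda$ transfer to identical conditions at $r$, since $\hat h^{(k)}(r)=e^{i(k-n)\theta}h^{(k)}(\lambda)$, while the normalisation $h(0)=\lambda^{n}$ is sent to $\hat h(0)=r^{n}$. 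Since the functional $\Norm{h}{W}-\abs{h(0)}$ is preserved under this map, one obtains $\phi(\lambda,\dots,\lambda)=\phi(r,\dots,r)$, reducing the problem to $\lambda=r\in(0,1)$.

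In this real positive case, the Hölder-type bound derived just above Lemma~\ref{lem:Blaschke_power_lemma}, applied with $B(z)=\big(\tfrac{z-r}{1-rz}\big)^{n}$, combined with Lemma~\ref{lem:Blaschke_power_lemma} itself, produces
\[
\phi(r,\dots,r)\geq\frac{1}{\Norm{(1-z^{2})B}{l_{\infty}^{A}}}-r^{n}\geq\frac{\sqrt{n}}{K(r)}-r^{n}.
\]
Because $r^{n}$ decays exponentially while $\sqrt{n}/K(r)$ grows, there is $n_{0}=n_{0}(\lambda)$ beyond which the right-hand side exceeds $\sqrt{n}/(2K(r))$. For each of the finitely many $n<n_{0}$ one still has $\phi(r,\dots,r)>0$: any admissible $h$ satisfies $h(0)=r^{n}$ and $\sum_{k\geq 1}\hat h(k)r^{k}=-r^{n}$, whence $\sum_{k\geq 1}\abs{\hat h(k)}\geq\Abs{\sum_{k\geq 1}\hat h(k)r^{k}}=r^{n}>0$. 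Taking $c(\lambda)$ to be the minimum of $1/(2K(r))$ together with the finitely many ratios $\phi(r,\dots,r)/\sqrt{n}$ for $n<n_{0}$ then completes the proof.

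The conceptual input is wholly encapsulated in Lemma~\ref{lem:Blaschke_power_lemma}, so what remains is essentially bookkeeping. The only mildly non-trivial step is the rotation reduction: care is needed to align the normalisations so that $\hat h(0)$ lands on $r^{n}$ rather than $\lambda^{n}$, and to verify that the Hermite multiplicities at $\lambda$ transfer correctly to $r$ under the substitution $z\mapsto e^{i\theta}z$. Neither of these presents a genuine obstacle, as both follow from the chain rule applied to $h(e^{i\theta}z)$ and the factor $e^{-in\theta}$ in the definition of $\hat h$.
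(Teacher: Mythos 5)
Your argument is correct and follows the same route the paper intends (the Hölder/duality lower bound on $\phi$ combined with Lemma~\ref{lem:Blaschke_power_lemma}); the paper simply declares the proposition an ``immediate consequence'' without writing anything out. The one genuine contribution you make beyond the paper's terse statement is the rotation reduction $\phi(\lambda,\dots,\lambda)=\phi(r,\dots,r)$ for $\lambda=re^{i\theta}$: this step is actually \emph{needed}, since Lemma~\ref{lem:Blaschke_power_lemma} is only stated for $\lambda\in(0,1)$ while the proposition claims all $\lambda\in\mathbb{D}\setminus\{0\}$, and a naive attempt to rotate $(1-z^{2})b_{\lambda}^{n}$ directly does not reproduce $(1-z^{2})b_{r}^{n}$ (the factor $1-z^{2}$ does not transform cleanly under $z\mapsto e^{i\theta}z$). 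Your isometry on $W$ acting on the admissible set for $\phi$, checked at the level of Hermite conditions and the normalisation $h(0)=\lambda^{n}$, sidesteps this cleanly; the final bookkeeping (absorbing the $-r^{n}$ term and handling finitely many small $n$ by the crude bound $\phi(r,\dots,r)\geq r^{n}>0$) is also sound.
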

Not only does this circumvent Turan's problem, but the estimate holds for any fixed $\lambda$. This avoids the $n$-dependence of the spectrum present in previous lower bounds.
\section{An interpolation-theoretic approach to Schäffer's question}
From now on our goal is to determine a class of \lq\lq worst\rq\rq{} operators that achieve $\phi\left(\lambda_1,\dots,\lambda_n\right)$ making thereby Proposition~\ref{prop1} entirely explicit. To this aim we start with a detailed discussion of Equation~\eqref{GMP_lemma} along the lines of Gluskin-Meyer-Pajor.

\subsection{Glusking-Meyer-Pajor's max-min problem} 
By homogeneity Schäffer's problem is to find the best~$\mathcal{S}$
such that
\[
\norm{(\det{T})\cdot{T}^{-1}}{}\leq\mathcal{S}
\]
holds for any invertible $T$ with $\norm{T}{}\leq1$. For a given
$T$ let $\cN=\cN(T)$ denote the collection of norms on $\mathbb{C}^{n}$
such that for the induced norm we have $\Norm{T}{}\leq1$. It is clear
that $\cN$ is not empty if and only if the set $\{T^{l}\ |\ l\geq0\}$
is bounded. Operators that have this property are commonly called
power bounded, i.e.~there exists a constant $C$ such that each power
of $T$ can be bounded by this constant, $\sup_{l\geq0}\Norm{T^{l}}{}\leq C$.
As a consequence $S$ can be written as a double supremum~\cite{GMP}
\begin{align}
\cS=\sup\left\{ \sup\left\{ \Norm{(\det{T})\cdot{T}^{-1}}{}\ |\ \Norm{\cdot}{}\in\cN(T)\right\} \ |\ T\ is\ power\ bounded\right\}.
\end{align}
For given $T$ the inner supremum is over all norms such that $\Norm{T}{}\leq1$.
The outer supremum is over all $T$, where power-boundedness is added
or else the inner supremum would be over an empty set. Gluskin-Meyer-Pajor
continue by proving~\cite[Proposition~2]{GMP} that if $T$ is power-bounded
then 
\begin{align*}
 \sup\left\{ \Norm{(\det{T})\cdot{T}^{-1}}{}\ |\ \Norm{\cdot}{}\in\cN(T)\right\}=\phi\left(\lambda_{1},\dots,\lambda_{n}\right).
\end{align*}
An operator on finite dimensional space is power bounded iff its spectrum is contained in the closed unit disk and no eigenvalues on the boundary
carry degeneracy. This reduces the task of lower bounding $\cS$ to the \lq\lq max-min-type\rq\rq{} optimization problem stated in Equation~\eqref{GMP_lemma}. The problem can be split into two components: $i)$ Given $(\lambda_{1},....,\lambda_{n})\in\mathbb{D}^{n}$ find the least Wiener-norm function $h$ with $h(\lambda_i)=0$ and $h(0)=1$, see Equation~\eqref{WienerNP}. This is a Nevanlinna-Pick interpolation problem in the Wiener algebra $W$. $ii)$ Find a suitable sequence $(\lambda_{1},....,\lambda_{n})\in\mathbb{D}^{n}$. The articles~\cite{GMP} and~\cite{QH} focus on the latter leaving the computation of $T$ an open task. Below we explicitly solve $i)$ using an operator-theoretic approach in terms of the norm a the so-called model operator. Computing matrix representations of this model will provide us with explicit matrices $T$ that achieve $\phi(\lambda_{1},....,\lambda_{n})$.

\subsection{Interpolation and the right class of operators} 

Let $m=\prod_{i=1}^{\abs{m}}(z-\lambda_{i})$ be a polynomial of degree
$\abs{m}$ with zeros $\lambda_{1},...,\lambda_{\abs{m}}$ in $\mathbb{D}$. The Blaschke product associated with $m$ is 
\[
B=\prod_{i=1}^{\abs{m}}b_{\lambda_{i}},\qquad b_{\lambda}=\frac{z-\lambda}{1-\bar{\lambda}z}
\]
and has numerator $m$. The $\abs{m}$-dimensional
\emph{model space} (for $W$, see \cite[p.~127]{NN1}) is defined as the quotient
vector space 
\[
K_{B}=W/BW,
\]
where $BW:=\{Bf\ |\ f\in W\}$. $W/BW$ inherits the Banach
algebra properties from $W$ and the norm on $K_{B}$ is defined as
\[
\Norm{a}{K_{B}}:=\Norm{a}{W/BW}:=\inf\{\Norm{f}{W}\:|\: f=a+mg,\, g\in W\}.
\]
We denote by $S$ the multiplication operator by $z$ on $W$ 
\begin{align*}
S:W & \rightarrow W\\
f & \mapsto S(f)=zf.
\end{align*}
The \emph{model operator} $M_{S}$ is \lq\lq{}the compression\rq\rq{}
of $S$ to the model space 
\begin{align*}
M_{S}:K_{B} & \rightarrow K_{B}\\
f & \mapsto zf.
\end{align*}
We will also use the operator norm $\Norm{M_{S}}{}:=\Norm{M_{S}}{K_{B}\rightarrow K_{B}}$.
As $K_{B}$ is an algebra it follows that multiplication by $z$ is
an operator on $W/BW$. It is known~\cite[p.~127]{NN1} that the minimal
polynomial of $M_{S}$ is equal to the numerator of $B$ and that
$\Norm{M_{S}}{}\le1$.

Interpolation problems in function algebras have been
studied in detail in the literature. For us most interesting is an
extension of the Nagy-Foiaş commutant lifting aproach to interpolation
theory~\cite{FF,NF,NF1} to general function algebras by N.K.~Nikolski~\cite[Theorem~3.4]{NN1}. For completeness the result is stated for general function algebras $A$ (see~\cite{NN1}) but we will use is only for $W$.

\begin{lem} \label{nikolai_lemma} \label{lem:Nikolai} \cite[Theorem~3.4]{NN1}
Let $m$ be a monic polynomial, $B$ the Blaschke product associated
with $m$, $A$ a function algebra and $C\geq1$. We have for $a\in A$
that 
\[
\Norm{a}{A/BA}\leq\sup\Norm{a(T)}{}\leq C\Norm{a}{A/BA},
\]
where the supremum is taken over all algebraic operators $T$ with
minimal polynomial $m$ obeying an $A$ functional calculus with constant
$C$. \end{lem}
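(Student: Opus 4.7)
The plan is to prove the two-sided estimate by a functional-calculus argument for the upper bound and by exhibiting the model operator $M_S$ as a universal extremiser for the lower bound. For the upper estimate $\sup_T\Norm{a(T)}{}\leq C\Norm{a}{A/BA}$, I would first show that every admissible $T$ satisfies $B(T)=0$ in the $A$-calculus. Writing $B=m/\tilde m$ with $\tilde m(z):=\prod_{i=1}^{|m|}(1-\bar\lambda_iz)$, one observes that $\tilde m$ is zero-free on $\overline{\mathbb{D}}$, so $1/\tilde m\in A$, and multiplicativity of the calculus yields $B(T)=m(T)\cdot(1/\tilde m)(T)=0$ because $m$ annihilates $T$. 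Hence for any representative $f=a+Bg\in A$ of the class $[a]\in A/BA$, one has $f(T)=a(T)+B(T)g(T)=a(T)$, and the defining calculus bound gives $\Norm{a(T)}{}\leq C\Norm{f}{A}$. Taking an infimum over $f$ and then a supremum over admissible $T$ completes this direction.

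For the lower estimate $\Norm{a}{A/BA}\leq\sup_T\Norm{a(T)}{}$, I would take $T=M_S$, multiplication by $z$ on the model space $A/BA$. As recorded in the paper, $M_S$ is algebraic with minimal polynomial $m$; its natural $A$-functional calculus sends $a$ to multiplication by $a$ on $A/BA$, and the Banach-algebra property gives $\Norm{a(M_S)}{}\leq\Norm{a}{A/BA}\leq\Norm{a}{A}$, so $M_S$ lies in the admissible family with constant $1\leq C$. Evaluating $a(M_S)$ on the class $[1]$ then saturates this: since $\Norm{[1]}{A/BA}=1$ — any representative $f=1+Bg$ takes value $1$ at each zero $\lambda_i$ of $B$, forcing $\Norm{f}{A}\geq 1$ by contractivity of point evaluation on $A$ — one obtains $\Norm{a(M_S)}{}\geq\Norm{a\cdot 1}{A/BA}=\Norm{a}{A/BA}$, which is the desired lower bound.

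The most delicate point I anticipate is verifying in full generality that the minimal polynomial of $M_S$ equals exactly $m$ rather than some proper divisor, particularly when $m$ has repeated zeros. The inclusion $m(M_S)=0$ is immediate from $m=B\tilde m\in BA$, but ruling out every proper monic divisor $p\mid m$ requires either a spectral argument identifying the eigenvalues of $M_S$ with their multiplicities, or a direct computation exhibiting an element of $A/BA$ on which $p(M_S)$ fails to vanish — typically by inspecting Taylor data at the relevant $\lambda_i$. A secondary technical matter is the identity $\Norm{[1]}{A/BA}=1$, which I would establish from contractivity of point evaluation of $A$ at the $\lambda_i$; once both points are in hand, the two inequalities assemble into the claimed two-sided bound.
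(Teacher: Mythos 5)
Your proof is correct and follows essentially the same route as the paper's: functional calculus together with annihilation of $T$ by $m$ (equivalently $B$, since $BA=mA$) gives the upper bound, and the model operator $M_S$ on $A/BA$ — admissible with constant $1$ — saturates the lower bound. You supply slightly more detail than the paper, in particular spelling out that $\Norm{a(M_S)}{}=\Norm{a}{A/BA}$ via $\Norm{[1]}{A/BA}=1$ and left-multiplication in a unital Banach algebra, and flagging the minimal-polynomial-of-$M_S$ point which the paper handles by citation to Nikolski; these are fleshed-out versions of the paper's terse assertions rather than a different argument.
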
 

The proof is as simple as clever. 

\begin{proof} If $a\in A$ and $T$ admits an $A$ functional calculus
with constant $C$ then we can bound 
\[
\Norm{a(T)}{}\leq C\Norm{a}{A}.
\]
By definition $m$ is the monic polynomial of least degree such that
$m(T)=0$. Therefore we have $(a+mg)(T)=a(T)$ for any function $g\in A$.
Together with the functional calculus inequality we conclude 
\[
\Norm{a(T)}{}\leq C\inf\{\Norm{f}{A}\:|\: f=a+mg,\, g\in A\}.
\]
The inequality is achieved by the model operator $M_{S}$ acting on
$K_{B}=A/BA$. Clearly $M_{S}$ is annihilated by $m$ and obeys an
$A$ functional calculus with constant $1$. Moreover since $A$ is
a unital algebra, $\Norm{a(M_{S})}{}=\Norm{a}{A/BA}$. \end{proof}

\begin{rem} \label{Rk_ext_rational_fct} The lemma is limited to
holomorphic functions but here we are interested in inverses and resolvents.
The trick that extends the lemma to rational functions $\psi$ was
provided in~\cite{SO1}. Suppose $\psi$ has a set of poles $\{\xi_{i}\}_{i=1}^{p}$
distinct from the zeros of $m$. One can apply Lemma \ref{lem:Nikolai}
to the polynomial 
\[
a(z)=\psi\prod_{i=1}^{p}\left(\frac{m(\xi_{i})-m(z)}{m(\xi_{i})}\right),
\]
where all singularities are lifted. \end{rem}

This shows how the interpolation problem~\eqref{WienerNP} is related to the model operator $M_{S}$. We choose
$\psi=1/z$, $A=W$ and we apply Lemma \ref{lem:Nikolai} to the above
$a$. We get 
\begin{align*}
\Norm{M_{S}^{-1}}{}  =\Norm{a}{W/BW} &=\inf\left\{ \Norm{f}{W}\:|\: f\in W,\:f(\lambda_{i})=\lambda_{i}^{-1}\right\} \\
 & =\inf\left\{ \Norm{h}{W}\:|\: h\in W,\:h(\lambda_{i})=0,\: h(0)=1\right\} -1.
\end{align*}
Multiplying by $\abs{\det(M_{S})}=\prod_{i=1}^{\abs{m}}\abs{\lambda_{i}}$ and comparing to~\eqref{WienerNP}
we find the following lemma.
\begin{lem}\label{schweinchenstinkt} In the notation introduced
above we have that
\[
\Norm{\det(M_{S})M_{S}^{-1}}{}=\phi_{\abs{m}}\left(\lambda_{1},\dots,\lambda_{\abs{m}}\right)
\]
\end{lem}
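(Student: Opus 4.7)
The proof is essentially an algebraic bookkeeping exercise that assembles pieces already developed in the section, so the plan is to chain three observations cleanly.

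First, I would invoke Nikolski's Lemma~\ref{lem:Nikolai} via the rational-extension trick of Remark~\ref{Rk_ext_rational_fct}, applied with $\psi(z)=1/z$ and $A=W$. The only pole of $\psi$ is at $\xi=0$, which (since $\lambda_i\neq 0$) is disjoint from the zeros of $m$, so the lifted polynomial is $a(z)=\frac{1}{z}\cdot\frac{m(0)-m(z)}{m(0)}$. This $a$ agrees with $1/z$ at every $\lambda_i$, $M_S$ has minimal polynomial $m$, and $M_S$ obeys a $W$ functional calculus with constant $1$. The lemma therefore yields
$$\Norm{M_{S}^{-1}}{}=\Norm{a}{W/BW}=\inf\{\Norm{f}{W}\:|\:f\in W,\ f(\lambda_i)=\lambda_i^{-1}\text{ for all }i\}.$$

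Second, I would set up the bijection $f\longleftrightarrow h$ given by $h(z)=1-zf(z)$, whose inverse $f(z)=(1-h(z))/z$ lies in $W$ precisely because $h(0)=1$. Under this correspondence, the condition $f(\lambda_i)=\lambda_i^{-1}$ becomes $h(\lambda_i)=0$ together with $h(0)=1$. A direct computation shows $\Norm{h}{W}=1+\Norm{f}{W}$: since the Wiener norm is the $\ell^1$ norm of the Taylor coefficients, and the constant $1$ is supported at $z^0$ while $-zf$ is supported at $z^k$ for $k\ge 1$, the two pieces have disjoint support and their norms simply add. Taking the infimum then gives
$$\Norm{M_{S}^{-1}}{}+1=\inf\{\Norm{h}{W}\:|\:h\in W,\ h(\lambda_i)=0,\ h(0)=1\}.$$

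Third, since the minimal polynomial of $M_S$ is $m$ and $\dim K_B=\abs{m}$, the operator $M_S$ is similar to the companion matrix of $m$, hence $\abs{\det(M_S)}=\prod_{i=1}^{\abs{m}}\abs{\lambda_i}$. Multiplying the previous identity by this factor and performing the affine substitution $H=\big(\prod_i\lambda_i\big)h$ to match the normalization $H(0)=\prod_i\lambda_i$ used in~\eqref{WienerNP}, I obtain
$$\abs{\det(M_S)}\cdot\Norm{M_{S}^{-1}}{}=\inf_H\Norm{H}{W}-\prod_{i=1}^{\abs{m}}\abs{\lambda_i}=\inf_H\bigl(\Norm{H}{W}-\abs{H(0)}\bigr)=\phi_{\abs{m}}(\lambda_1,\dots,\lambda_{\abs{m}}),$$
where the infima run over $H\in W$ satisfying $H(\lambda_i)=0$ and $H(0)=\prod_i\lambda_i$. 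Combined with $\Norm{\det(M_S)\,M_{S}^{-1}}{}=\abs{\det(M_S)}\cdot\Norm{M_{S}^{-1}}{}$, this is the claimed identity.

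The only nontrivial point, rather than a real obstacle, is the norm equality $\Norm{1-zf}{W}=1+\Norm{f}{W}$: it is the fact that this is an \emph{equality} (not just an inequality) that produces the matching constant~$-\prod\abs{\lambda_i}$ appearing in the definition of $\phi$, and it depends on the $\ell^1$ structure of the Wiener algebra together with the supports of $1$ and $zf$ being disjoint. Everything else is straightforward linear-algebraic manipulation once Nikolski's commutant-lifting result is invoked.
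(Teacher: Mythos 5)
Your proposal is correct and follows essentially the same route as the paper: invoke Lemma~\ref{lem:Nikolai} via Remark~\ref{Rk_ext_rational_fct} with $\psi=1/z$, reparametrize the interpolation problem by $h=1-zf$, and multiply through by $\abs{\det(M_S)}=\prod_i\abs{\lambda_i}$ to match~\eqref{WienerNP}. The only difference is that you spell out the bookkeeping steps — the bijection $f\leftrightarrow h$, the disjoint-support argument giving $\Norm{1-zf}{W}=1+\Norm{f}{W}$, and the rescaling $H=(\prod_i\lambda_i)h$ — which the paper leaves implicit.
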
 
This representation provides an explicit
class of operators, which are optimal for the computation of $\phi_{\abs{m}}\left(\lambda_{1},\dots,\lambda_{\abs{m}}\right)$.
This way the inner supremum in the representation of $\cS$ is covered. The remaining supremum over sequences $(\lambda_{1},....,\lambda_{n})$
corresponds to finding a suitable sequence of eigenvalues of $M_{S}$.
The construction of such sequences is a crucial step in~\cite{GMP,QH},
where one might expect the hard work also in this article. However,
we have already seen in Proposition~\ref{prop1} that the simple choice of the
singleton spectrum $\{\lambda\}$ suffices to demonstrate that $\phi(\lambda_{1},....,\lambda_{n})$
grows like $\sqrt{n}$. From a theoretical point of view the proof of the lemma is more interesting
than its content. We wrote it out for $\psi=1/z$ but the method works
for any rational function $\psi$, which generalizes Lemma~\ref{nikolai_lemma}
to rational function. The theoretical consequence is that the Nagy-Foiaş
commutant lifting approach to interpolation theory is not limited to holomorphic functions but is also suitable for interpolation
with rational functions. See~\cite{SO1} for Lemma~\ref{lem:Nikolai} written out for
general rational functions.

\subsection{An explicit counterexample to Schäffer's conjecture}
In this section we compute the matrix entries of $M_S$ in a natural orthonormal basis for $K_B$. This makes Propostion~\ref{prop1} explicit in that it provides a sequence of power-bounded Toeplitz matrix $T_{\lambda}\in\mathcal{M}_{n}$ with $\phi(\lambda,...,\lambda)=\Norm{\det(T_\lambda)T_\lambda^{-1}}{}$.
\begin{thm}
\label{thm:Main_theo} For any fixed $\lambda\in\mathbb{D}-\{0\}$
the Toeplitz matrix

\[
T_{\lambda}=\left(\begin{array}{ccccc}
\lambda & 0 & \ldots & \ldots & 0\\
1-\lambda^{2} & \lambda & \ddots & . & \vdots\\
-\bar{\lambda}(1-\lambda^{2}) & 1-\lambda^{2} & \lambda & \ddots & \vdots\\
\vdots & \ddots & \ddots & \ddots & 0\\
(-\bar{\lambda})^{n-2}(1-\lambda^{2}) & \ldots & -\bar{\lambda}(1-\lambda^{2}) & 1-\lambda^{2} & \lambda
\end{array}\right)
\]
is an explicit counter-example to Schäffer's conjecture. $T_{\lambda}$ is power-bounded, i.e.~there exists a norm $\Norm{\cdot}{}\in\cN(T_\lambda)$ with $\Norm{T_\lambda}{}\leq1$ and
\[
\mathcal{S}\geq\Norm{\det(T_\lambda)T_{\lambda}^{-1}}{}\geq c(\lambda)\sqrt{n}.
\]
\end{thm}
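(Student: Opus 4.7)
The plan is to identify $T_{\lambda}$ as the matrix representation of the model operator $M_{S}$ acting on $K_{B}=W/BW$ with $B=b_{\lambda}^{n}$ in a well-chosen basis of $K_{B}$, and then to invoke Lemma~\ref{schweinchenstinkt} together with Proposition~\ref{prop1}. Once the identification is in place, power-boundedness of $T_{\lambda}$ follows from the general bound $\Norm{M_{S}}{K_{B}\to K_{B}}\leq 1$ recalled in the excerpt, and the lower estimate on $\Norm{\det(T_{\lambda})T_{\lambda}^{-1}}{}$ reduces directly to $\phi(\lambda,\dots,\lambda)\geq c(\lambda)\sqrt{n}$, which is Proposition~\ref{prop1}.

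Concretely, I would work with the Malmquist--Walsh basis
\[
e_{k}(z)=\frac{\sqrt{1-\abs{\lambda}^{2}}}{1-\bar{\lambda}z}\,b_{\lambda}^{k-1}(z),\qquad k=1,\dots,n,
\]
viewed as elements of $W$ projected to $K_{B}$. A short check (multiply a putative linear relation by $(1-\bar{\lambda}z)^{n-1}$ and successively specialise at $z=\lambda$) shows that the $e_{k}$ are linearly independent in $K_{B}$ and hence form a basis of the $n$-dimensional space $W/BW$. The matrix computation rests on the single identity $(z-\lambda)e_{k}=(1-\bar{\lambda}z)\,e_{k+1}$, which is immediate from $b_{\lambda}=(z-\lambda)/(1-\bar{\lambda}z)$, together with the observation that $e_{n+1}\equiv 0$ in $K_{B}$ because its numerator already contains $b_{\lambda}^{n}=B$. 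Rearranging yields $z e_{k}=\lambda e_{k}+e_{k+1}-\bar{\lambda}z e_{k+1}$; iterating this relation once per index and using $e_{n+1}\equiv 0\bmod BW$ to kill the final back-substitution, one obtains
\[
M_{S}e_{k}=\lambda e_{k}+(1-\abs{\lambda}^{2})\sum_{j=0}^{n-k-1}(-\bar{\lambda})^{j}\,e_{k+1+j}.
\]
This displays the matrix of $M_{S}$ in the basis $(e_{k})$ as lower-triangular Toeplitz with diagonal $\lambda$ and $(j+1)$-th subdiagonal $(1-\abs{\lambda}^{2})(-\bar{\lambda})^{j}$. For real $\lambda\in(0,1)$ one has $1-\abs{\lambda}^{2}=1-\lambda^{2}$ and this matrix coincides exactly with $T_{\lambda}$ as stated; for general $\lambda\in\mathbb{D}\setminus\{0\}$ the statement should be read with $\lambda^{2}$ replaced by $\abs{\lambda}^{2}$ and the argument is identical.

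The conclusion is then immediate. Identifying $(e_{k})_{k=1}^{n}$ with the canonical basis of $\mathbb{C}^{n}$ and pulling back the quotient norm $\Norm{\cdot}{W/BW}$ defines an induced norm on $\mathbb{C}^{n}$ under which $\Norm{T_{\lambda}}{}=\Norm{M_{S}}{K_{B}\to K_{B}}\leq 1$; hence this norm belongs to $\cN(T_{\lambda})$ and $T_{\lambda}$ is power-bounded. Since $T_{\lambda}$ is lower triangular with diagonal $\lambda$, one has $\det(T_{\lambda})=\lambda^{n}=\det(M_{S})$, so Lemma~\ref{schweinchenstinkt} combined with Proposition~\ref{prop1} gives
\[
\Norm{\det(T_{\lambda})T_{\lambda}^{-1}}{}=\phi(\lambda,\dots,\lambda)\geq c(\lambda)\sqrt{n}.
\]

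The main obstacle is the careful unfolding of the recursion in the second step: the trailing back-substitutions $z e_{k+1}, z e_{k+2},\dots$ must be tracked and shown to telescope into precisely the geometric-in-$(-\bar{\lambda})$ Toeplitz pattern above, and one must verify that the final $z e_{n+1}$ remainder vanishes modulo $BW$ rather than leaking into a correction in the last column. Once this is checked for a single column, Toeplitz structure in all remaining columns is automatic, because the recursion relates only consecutive basis vectors and the truncation at $n$ merely shortens the subdiagonal tail.
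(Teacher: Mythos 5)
Your proposal is correct and follows essentially the same route as the paper: represent $M_{S}$ in the Malmquist--Walsh basis of $K_{B}$, pull back the $W/BW$ quotient norm to $\mathbb{C}^{n}$ (which gives $\Norm{T_{\lambda}}{}=\Norm{M_{S}}{}\leq 1$, hence power-boundedness), and combine Lemma~\ref{schweinchenstinkt} with Proposition~\ref{prop1}. The only point of divergence is that you derive the matrix entries of $M_{S}$ from the recursion $(z-\lambda)e_{k}=(1-\bar\lambda z)e_{k+1}$ and the vanishing of $e_{n+1}$ modulo $BW$, whereas the paper simply cites~\cite[Proposition~III.5]{SO1} for the same formula; your telescoping computation (with subdiagonal coefficients $(1-\abs{\lambda}^{2})(-\bar\lambda)^{j}$) is a legitimate self-contained replacement for that citation. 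You also correctly flag that the stated matrix has $1-\lambda^{2}$ where the derivation naturally produces $1-\abs{\lambda}^{2}$; these agree for $\lambda\in(0,1)$ (the only case actually covered by Lemma~\ref{lem:Blaschke_power_lemma}), and the discrepancy for non-real $\lambda$ is a minor typographical issue in the theorem statement, not a gap in the argument.
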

In the course of the proof of the theorem we will also show the mentioned norm explicitly. The lower estimate of Section~\ref{sec:A-constructive-method} can be viewed as a simple way to lower bound $\Norm{\det(T_\lambda)T_\lambda^{-1}}{}$. The matrix $\det(T_\lambda)T_\lambda^{-1}$ is given explicitly in \cite[Theorem III.2]{SO1}. This way $\Norm{\det(T_\lambda)T_\lambda^{-1}}{}$ can in principle be computed with the support of appropriate software.
\begin{proof}
Let $H^2\subset L^2(\partial\mathbb{D})$ denote the standard Hardy space of the boundary $\partial{\mathbb{D}}$, see \cite{NN2} for details. A well-known orthonormal basis for the space $H^2\ominus BH^2$ is the
the Malmquist-Walsh basis $\{e_{j}\}_{j=1,...,\abs{m}}$
given by (\cite[p.~117]{NN3}) 
\begin{align*}
e_{j}(z):=\frac{(1-\abs{\lambda_{j}}^{2})^{1/2}}{1-\bar{\lambda}_{j}z}\prod_{i=1}^{k-1}\frac{z-\lambda_{i}}{1-\bar{\lambda}_{i}z}.
\end{align*}
The empty product is defined to be $1$ i.e.~$e_{1}(z)=\frac{(1-\abs{\lambda_{1}}^{2})^{1/2}}{1-\bar{\lambda}_{1}z}$. Making use of the fact that $W\subset H^2$ and that rational functions are contained in $W$ it is not hard to see that the equivalence classes $[e_{j}]:=\{e_j+Bf\:|\:f\in W\}$ with $j=1,...,\abs{m}$ constitute an orthonormal basis of $W/BW$ (with respect to the scalar product inherited from $L^{2}(\partial\mathbb{D})$) . We introduce a norm $\abs{\cdot}$ on $\mathbb{C}^{\abs{m}}\cong W/BW$ by
\[
\abs{x}:=\inf\{\norm{\sum_{j=1}^{\abs{m}}x_{j}e_{j}+Bg}{W}:\: g\in W\}.
\]
Let $\Norm{\cdot}{}$ be the matrix norm induced by $\abs{x}$. Lemma~\ref{schweinchenstinkt} yields 
\[
\Norm{\det(T_{\lambda})(T_{\lambda})^{-1}}{}=\phi_{\abs{m}}\left(\lambda_{1},\dots,\lambda_{\abs{m}}\right)
\]
The entries of $M_{S}$ with respect to $\{e_{j}\}_{j=1,...,\abs{m}}$ have been computed in~\cite[Proposition III.5]{SO1}. For $\lambda_{1}=\lambda_{2}=\dots=\lambda_{\abs{m}}=\lambda$ this matrix representation is exactly $T_\lambda$. To complete the proof it remains only to choose $\abs{m}=n$ and apply Lemma \ref{lem:Blaschke_power_lemma}.
\end{proof}

\subsection{Lower bounds for the resolvent}
We conclude this section studying Schäffer's question in a broader context applying the new method to bound the resolvent 
\[
R(\zeta,T)=(\zeta-T)^{-1}
\]
with given $\zeta\in\mathbb{D}$. To obtain a finite bound some kind of regularity regarding the location of eigenvalues of $T$ with respect to $\zeta$ must be assumed. In Schäffer's original discussion this regularization is achieved
through multiplication by the determinant. When $\zeta$ is shifted
away from the origin a natural generalization of Schäffer's question is to find the best $\cS_\zeta$ so that
\[
\abs{\det(b_{\zeta}(T))}\Norm{(\zeta-T)^{-1}}{}\leq\mathcal{S}_{\zeta}
\]
holds for any $T\in\cM_n$ with $\Norm{T}{}\leq1$ and $\sigma(T)\cap\zeta=\emptyset$. Applying Lemma~\ref{lem:Nikolai} to the polynomial $a(z)=\frac{m(\zeta)-m(z)}{\zeta-z}\frac{1}{m(\zeta)}$ and following the steps that led us to Theorem~\ref{thm:Main_theo} we find the more general result stated below.
\begin{thm} \label{lower bound} Let $T_{\lambda}\in\cM_n$ and $\Norm{\cdot}{}\in\cN(T_\lambda)$ be as in Theorem~\ref{thm:Main_theo}, let $\lambda\in(0,1)$ and $b_{\lambda}=\frac{z-\lambda}{1-\lambda z}$. Then we have that
\[
\abs{b_{\lambda}^{n}(\zeta)}\Norm{(\zeta-T_\lambda)^{-1}}{}\geq d\sqrt{n}.
\]
where $d=d(\lambda,\zeta)>0$ depends only on $\lambda$ and $\zeta$.
\end{thm}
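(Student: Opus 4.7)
The plan is to mirror the proof of Theorem~\ref{thm:Main_theo}, replacing the rational function $\psi = 1/z$ by $\psi = 1/(\zeta - z)$, exactly as hinted above. First I invoke Remark~\ref{Rk_ext_rational_fct} and Lemma~\ref{lem:Nikolai} for the polynomial $a(z) = (m(\zeta) - m(z))/((\zeta - z)\,m(\zeta))$; since $m(T_\lambda) = 0$ we have $a(T_\lambda) = (\zeta I - T_\lambda)^{-1}$, and the model-space identification used in the proof of Theorem~\ref{thm:Main_theo} yields $\Norm{(\zeta - T_\lambda)^{-1}}{} = \Norm{a(M_S)}{} = \Norm{a}{W/BW}$. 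Proving the theorem reduces to the lower estimate $\Norm{a}{W/BW} \geq d\sqrt{n}/\abs{b_\lambda(\zeta)}^n$.

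To produce this bound I lift the H\"older trick of Section~\ref{sec:A-constructive-method} from $h$ to $a$. For any representative $f = a + Bh$ with $h \in W$ I form the pairing $\langle z^3 f, (1-z^2) B\rangle$. Since the polynomial $1-z^2$ has degree $2 < 3$, a direct Fourier coefficient calculation (parallel to the one in Section~\ref{sec:A-constructive-method}) shows that $\langle z^3 Bh, (1-z^2) B\rangle = 0$ for every $h \in W$, hence the pairing is determined by the coset $a + BW$ alone. Using $B\bar B = 1$ on $\partial\mathbb D$ and $\d\varphi = dz/(iz)$, it equals
\[
\langle z^3 f, (1-z^2) B\rangle \;=\; \int_{\partial\mathbb D} \frac{(z^2-1)\, a(z)}{B(z)}\,\frac{dz}{2\pi i} \;=\; \mathrm{Res}_{z=\lambda}\!\left[\frac{(z^2-1)\,a(z)}{B(z)}\right] \;=:\; I(\lambda,\zeta),
\]
the second equality holding because $a$ is a polynomial, so $(z^2-1)a/B$ has its only pole inside $\mathbb D$ at $z = \lambda$. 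H\"older's inequality together with Lemma~\ref{lem:Blaschke_power_lemma} then yields $\Norm{f}{W} \geq \abs{I(\lambda,\zeta)}\sqrt{n}/K(\lambda)$.

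The main obstacle is the asymptotic analysis of $I(\lambda,\zeta)$, an order-$n$ residue. Using the expansion $a(z) = \sum_{j=0}^{n-1}(z-\lambda)^j/(\zeta-\lambda)^{j+1}$, the residue equals the coefficient $[w^{n-1}]$ of $H(w)K(w)/(\beta - w)$, where $w = z-\lambda$, $\beta = \zeta-\lambda$, $H(w) = w^2 + 2\lambda w - (1-\lambda^2)$, and $K(w) = ((1-\lambda^2) - \lambda w)^n$. Writing $K(w) = \sum_{k=0}^n\binom{n}{k}(1-\lambda\zeta)^{n-k}\lambda^k(\beta - w)^k$ (Taylor expansion at $w = \beta$) separates $K/(\beta-w)$ into a simple-pole part $(1-\lambda\zeta)^n/(\beta-w)$ and a polynomial remainder. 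For $n\geq 3$ the pole part produces the main term
\[
\frac{(1-\lambda\zeta)^n}{(\zeta-\lambda)^n}\bigl(\beta^2 + 2\lambda\beta - (1-\lambda^2)\bigr) \;=\; \frac{\zeta^2 - 1}{b_\lambda(\zeta)^n},
\]
while the polynomial remainder, after multiplication by $H$ and extraction of $[w^{n-1}]$, contributes a correction of size $O(n^2\lambda^{n-2})$ (each of its terms carries a factor $\lambda^k$ with $k\geq n-2$). Because $\lambda\in(0,1)$ and $\abs{b_\lambda(\zeta)} < 1$, the ratio of remainder to main term is $O\bigl(n^2(\lambda\abs{b_\lambda(\zeta)})^n\bigr) \to 0$, so $\abs{I(\lambda,\zeta)} \geq \abs{\zeta^2-1}/(2\abs{b_\lambda(\zeta)}^n)$ for all $n \geq n_0(\lambda,\zeta)$. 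Combining this with the H\"older bound above and absorbing the finitely many small values of $n$ into the constant gives
\[
\abs{b_\lambda(\zeta)}^n \Norm{(\zeta-T_\lambda)^{-1}}{} \;=\; \abs{b_\lambda(\zeta)}^n\,\Norm{a}{W/BW} \;\geq\; d\sqrt{n}
\]
for an appropriate $d = d(\lambda,\zeta) > 0$, as claimed.
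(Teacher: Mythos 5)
Your proof is correct and follows exactly the route the paper indicates: reduce via Lemma~\ref{lem:Nikolai} (with $a(z)=(m(\zeta)-m(z))/((\zeta-z)m(\zeta))$, so $a(M_S)=(\zeta-M_S)^{-1}$) to $\Norm{a}{W/BW}$, pair against $(1-z^{2})B$ as in Section~\ref{sec:A-constructive-method}, and invoke Lemma~\ref{lem:Blaschke_power_lemma}; your choice of $z^{3}$ (rather than $z^{2}$) so that the $BW$-coset drops out for an \emph{arbitrary} representative is the right adaptation, and I checked the residue computation --- the change of variable $w=z-\lambda$, the reduction to $[w^{n-1}]\,H(w)K(w)/(\beta-w)$, the Taylor expansion of $K$ at $w=\beta$ isolating the main term $(\zeta^{2}-1)/b_{\lambda}(\zeta)^{n}$, and the $O(n^{2}\lambda^{n-2})$ tail --- and it all holds. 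One small remark worth keeping in mind: your constant $d$ is proportional to $\abs{\zeta^{2}-1}$ and so degenerates as $\zeta\to\pm1$; this is harmless here since the section fixes $\zeta\in\mathbb{D}$ (so $\abs{\zeta^{2}-1}>0$ and $\lambda\abs{b_{\lambda}(\zeta)}<1$), but it is an inherent feature of testing against $(1-z^{2})B$.
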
 $\:$

\section{\label{Upper_bounds} Upper estimates related to Schäffer's question}

In this section we derive upper estimates for the resolvent of an
algebraic power-bounded operator. This is motivated by studying sharpness of the prefactor $\sqrt{e}$ in Schäffer's upper bound but also by continuing the existing line of research on this topic~\cite{DS,NN1}. Given $n\geq1$ and $1\leq C<\infty$
let $\cP_{n}(C)$ denote the set of power-bounded matrices/algebraic
operators $T$ with respect to any particular Banach norm $\Norm{\cdot}{}$,
$\sup_{k\geq0}\Norm{T^{k}}{}\leq C$. To ensure a finite bound $\zeta$
must be separated from the spectrum $\sigma(T)$ of $T$. It will
be convenient to measure this separation in \emph{Euclidean} distance
$
d(z,\, w)=\Abs{z-w}
$
or \emph{pseudo-hyperbolic} distance 
$
p(z,\, w)=\Abs{\frac{z-w}{1-\overline{z}w}}$
depending on the magnitude $\abs{\zeta}$. We write briefly $r\in(0,1)$
for the pseudo-hyperbolic distance between $\zeta$ and $\sigma(T)$.

\begin{thm} \label{main1} Let $T\in\cP_{n}(C)$ with minimal polynomial
$m=\prod_{i}^{\abs{m}}(z-\lambda_{i})$ of degree $\abs{m}$. The
following assertions hold: 
\begin{enumerate}
\item If $\abs{\lambda_{i}}=1$ for all $i=1,...,\abs{m}$ then for any
$\zeta\in\mathbb{C}\setminus\{\emph{zeros}(m)\}$ we have that 
\[
\Norm{R(\zeta,\, T)}{}\leq C\frac{\sqrt{\abs{m}}}{\min_{i}\abs{\zeta-\lambda_{i}}}.
\]

\item If $\zeta=0$ and $r=\min_{i}\abs{\lambda_{i}}>0$ we have that 
\[
\Norm{T^{-1}}{}\leq C\frac{\sqrt{\abs{m}(e-r^{2\abs{m}})}}{r^{\abs{m}}}.
\]

\item For $\zeta\in\mathbb{D}$ we have that 
\begin{align*}
\Norm{R(\zeta,\, T)}{}\leq Ce\sqrt{2}\frac{\sqrt{\abs{m}}}{\min_{i}\abs{1-\bar{\lambda}_{i}\zeta}r^{\abs{m}}}\sqrt{\frac{1}{1-r\Abs{\zeta}}+\frac{1}{2(1-r^{2})\abs{m}}},
\end{align*}
where $r=p(\zeta,\,\sigma)$. 
\item In case that $\zeta\in\partial\mathbb{D}$ we have 
\[
\Norm{R(\zeta,\, T)}{}\leq\frac{3}{2}C\sqrt{e^{2}-1}\frac{\abs{m}}{\min_{i}\abs{\zeta-\lambda_{i}}}.
\]

\end{enumerate}
\end{thm}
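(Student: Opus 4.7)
The plan is to invoke Lemma~\ref{nikolai_lemma} in its rational-function extension (Remark~\ref{Rk_ext_rational_fct}). For each $\zeta\notin\{\lambda_1,\ldots,\lambda_{|m|}\}$ the rational function $\psi(z)=1/(\zeta-z)$ has the unique pole $\zeta$, so the associated polynomial is
\[
a(z)=\frac{m(\zeta)-m(z)}{(\zeta-z)\,m(\zeta)}, \qquad \deg a=|m|-1,
\]
and $a(T)=R(\zeta,T)$ because $m(T)=0$ by definition of the minimal polynomial. Power-boundedness $T\in\cP_n(C)$ endows $T$ with a Wiener functional calculus of constant $C$, so the extended Nikolski lemma yields $\Norm{R(\zeta,T)}{}\leq C\,\Norm{a}{W/BW}$ (with $W/mW$ in place of $W/BW$ when all $\abs{\lambda_i}=1$; the two quotients coincide whenever $B$ is a genuine Blaschke product since $BW=mW$). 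The problem is thereby reduced to estimating the quotient-algebra norm of $a$.

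The universal device converting a $W$-quotient bound into a Hilbertian one is the Cauchy-Schwarz inequality $\Norm{f}{W}\leq\sqrt{N}\,\Norm{f}{H^{2}}$, valid whenever $f$ is supported on $N$ Fourier modes; applying it to a representative of $a$ concentrated on the $|m|$ modes attached to the model space $K_{B}$ produces the factor $\sqrt{|m|}$ common to all four estimates. Cases~(1) and~(2) use $a$ itself as the representative; Parseval's identity gives
\[
\Norm{a}{H^{2}}^{2}=\frac{1}{|m(\zeta)|^{2}}\sum_{j=0}^{|m|-1}\Abs{\sum_{k=j+1}^{|m|}\hat{m}(k)\,\zeta^{k-1-j}}^{2},
\]
which in Case~(2) collapses to $(\Norm{m}{H^{2}}^{2}-|m(0)|^{2})/|m(0)|^{2}$. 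In Case~(1), the lower bound $|m(\zeta)|\geq\min_i|\zeta-\lambda_i|\cdot\prod_{j\neq i}|\zeta-\lambda_j|$ combined with the normalization $\abs{\lambda_j}=1$ yields the desired $1/\min_i|\zeta-\lambda_i|$ factor. In Case~(2), the refinement from Schäffer's constant $\sqrt{e}$ to $\sqrt{e-r^{2|m|}}$ comes from tightening the Schäffer-type estimate on $\Norm{m}{H^{2}}^{2}/|m(0)|^{2}$ so as to retain the geometric-series tail that the original computation drops.

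Cases~(3) and~(4) require a non-trivial modification $a\mapsto a+Bg$ in order to absorb the pole at $\zeta\in\overline{\mathbb{D}}$ into a multiple of the Blaschke product. Writing $m=B\cdot d$ with $d(z)=\prod_i(1-\bar\lambda_i z)$ we have
\[
a(z)=\frac{1}{\zeta-z}-\frac{B(z)\,d(z)}{(\zeta-z)\,m(\zeta)},
\]
and one subtracts an appropriate $Bg\in BW$ to produce a representative whose Fourier series splits cleanly into a low-frequency block of $|m|$ coefficients (bounded in $\ell^{2}$ by a quantity involving $(1-r\abs{\zeta})^{-1}$) and a high-frequency tail (bounded in $\ell^{2}$ by $((1-r^{2})|m|)^{-1}$). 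The identity $\abs{B(\zeta)}=\prod_i p(\zeta,\lambda_i)\geq r^{|m|}$ then supplies the pseudo-hyperbolic denominator $r^{|m|}\min_i|1-\bar\lambda_i\zeta|$, and a two-scale Cauchy-Schwarz estimate assembles both pieces into the bound of Case~(3). The boundary regime $\abs{\zeta}=1$ in Case~(4) is treated by a single coarser summation over all $|m|$ modes, which costs a factor $\sqrt{|m|}$ but retains the explicit constant $\tfrac{3}{2}\sqrt{e^{2}-1}$ via a standard boundary $H^{2}$ computation.

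The main obstacle is the construction of the sharp representative in Cases~(3) and~(4) together with the matching $H^{2}$ estimate: the cut-off between the low- and high-frequency blocks must be chosen optimally in terms of $|m|$, $r$ and $\abs{\zeta}$, and the $H^{2}$ bounds rely on the factorization $m=B\cdot d$ combined with the Szegő-type control of $\Norm{d}{H^{2}}$ afforded by $\abs{\lambda_i}\leq 1$. By contrast, the Nikolski reduction and the generic Cauchy-Schwarz passage from $W$ to $H^{2}$ are essentially mechanical; the genuine content of the proof lies in the case-by-case choice of representative and in squeezing the refined absolute constants out of the $H^{2}$ bound.
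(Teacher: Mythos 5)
Your reduction via Lemma~\ref{nikolai_lemma} and Remark~\ref{Rk_ext_rational_fct} is exactly the paper's starting point, and the general idea of passing from $W$ to $H^{2}$ by a Cauchy--Schwarz estimate is also correct in spirit. But the specific device you invoke --- take the naive polynomial $a(z)=\frac{m(\zeta)-m(z)}{(\zeta-z)\,m(\zeta)}$ as the representative and use $\Norm{f}{W}\leq\sqrt{N}\Norm{f}{H^{2}}$ for a function supported on $N$ Fourier modes --- does not produce the stated bounds, and this is a genuine gap rather than a detail. The $H^{2}$ norm of the naive representative is in general exponentially large in $\abs{m}$. Concretely, in Case~(1) take $m(z)=(z-\lambda)^{n}$ with $\abs{\lambda}=1$ and $w=\zeta-\lambda$; then $a(z)=\sum_{j=0}^{n-1}(z-\lambda)^{j}/w^{j+1}$ and, since $\Norm{(z-\lambda)^{j}}{H^{2}}^{2}=\binom{2j}{j}$, the quantity $\Norm{a}{H^{2}}$ grows like $2^{n}/\abs{w}^{n}$ as $\abs{w}\to 0$, which cannot be compared with $1/\abs{w}$. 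The same phenomenon kills Case~(2): for $m=(z-r)^{n}$ one has $\Norm{a}{H^{2}}^{2}=\sum_{j=0}^{n-1}\binom{n}{j}^{2}r^{2j}/r^{2n}$, whose numerator is of order $\binom{2n}{n}$ for $r$ near $1$, far exceeding the $e-r^{2n}$ that the theorem requires, so the claimed ``tightening of Schäffer's estimate on $\Norm{m}{H^{2}}^{2}/\abs{m(0)}^{2}$'' is not available.

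What the paper does instead is crucially different in two respects. First, it does not use the finite-support Cauchy--Schwarz: it uses the dilation inequality $\Norm{f_{\rho}}{W}\leq\Norm{f}{H^{2}}/\sqrt{1-\rho^{2}}$ for $f_{\rho}(z)=f(\rho z)$, valid for every $f\in H^{2}$, with a free parameter $\rho\in(0,1)$. Second, and more importantly, it replaces the naive representative $a$ by the $H^{2}$-minimal one: it forms the dilated Blaschke product $\tilde{B}$ with zeros $\rho\lambda_{i}$, projects $\frac{1}{\zeta-z/\rho}$ onto $K_{\tilde{B}}$, and evaluates the $H^{2}$ norm of that projection exactly via the Malmquist--Walsh basis (reproducing kernel of the model space). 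Undoing the dilation gives a representative in $W$ interpolating the correct values at $\lambda_{i}$, and the two ingredients combine into the single inequality of Lemma~\ref{mainlemma}, valid for all $\rho\in(0,1)$. All four parts of Theorem~\ref{main1} then follow by choosing $\rho$ appropriately ($\rho\uparrow 1$ in Case~(1), $\rho^{2}=1-1/(\abs{m}+1)$ in Case~(2), $1-\rho^{2}=\min_{i}\abs{1-\bar{\lambda}_{i}\zeta}/(2\abs{m})$ in Cases~(3) and~(4)); there is no separate ad hoc construction for Cases~(3) and~(4), which is the step you flagged as the missing obstacle. So the missing idea is precisely the $\rho$-dilation combined with the projection onto the model space; without it, your $H^{2}$ bound on $a$ is far too weak to deliver the stated constants.
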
 

Recall that for $\zeta=0$ Schäffer's original estimate reads $\Norm{T^{-1}}{}\leq\frac{\sqrt{en}}{r^{n}}$.
Theorem~\ref{Upper_bounds} point (2) is a slightly stronger bound.
To date all operators that served to provide lower bounds with regards
to Schäffer's conjecture had eigenvalues on a circle of radius $1-\frac{const}{n}$.
In this case the term $r^{n}$ does not go to zero and we conclude
that this class of examples has no hope to achieve Schäffer's upper
bound. More precisely we have the following corollary, which is a direct consequence of the proof of Theorem~\ref{main1}.
\begin{lem} \label{mainlemma} Let $T\in\cP_{n}(C)$ with minimal
polynomial $m$ of degree $\abs{m}$. Let $\lambda_{1},\,\lambda_{2},\,...,\,\lambda_{\abs{m}}$
denote the zeros of $m$. For any fixed $\zeta\in\mathbb{C}\setminus\{\emph{zeros}(m)\}$
and $\rho\in(0,1)$, we have that 
\begin{align*}
\Norm{R(\zeta,\, T)}{}{}^{2}\leq\frac{C}{1-\rho^{2}}\sum_{k=1}^{\abs{m}}\frac{1}{\rho^{2k-2}}\frac{1-\rho^{2}\left|\lambda_{k}\right|^{2}}{\left|\zeta-\lambda_{k}\right|^{2}}\prod_{j=1}^{k-1}\left|\frac{1}{b_{\lambda_{j}}(\zeta)}\left(1+\frac{(1-\rho^{2})\overline{\lambda_{j}}\zeta}{1-\overline{\lambda_{j}}\zeta}\right)\right|^{2}.
\end{align*}
\end{lem}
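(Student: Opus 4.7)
The plan is to apply the commutant lifting machinery (Lemma \ref{lem:Nikolai}) together with its rational extension (Remark \ref{Rk_ext_rational_fct}), but in a weighted Hardy space tailored to the parameter $\rho$. Using the rational extension with $\psi(z) = 1/(\zeta - z)$, I write $R(\zeta, T) = a(T)$ for the polynomial
\[
a(z) = \frac{m(\zeta) - m(z)}{(\zeta - z)\, m(\zeta)}
\]
of degree $\abs{m}-1$. I then take $A = H^2_{1/\rho}$, the Hardy space of the disk $\{\abs{z} < 1/\rho\}$ with inner product $\langle z^j, z^k\rangle = \rho^{-2k}\delta_{jk}$. For $T \in \cP_n(C)$, a weighted Cauchy--Schwarz argument upgrades the trivial Wiener bound $\Norm{f(T)}{} \leq C \sum_k \abs{\hat f(k)}$ into an $H^2_{1/\rho}$-functional calculus with constant $C/\sqrt{1-\rho^2}$, since
\[
\sum_k \abs{\hat f(k)} = \sum_k \abs{\hat f(k)}\rho^{-k}\rho^k \leq \Norm{f}{H^2_{1/\rho}}\cdot(1-\rho^2)^{-1/2}.
\]
Lemma \ref{lem:Nikolai} then yields $\Norm{R(\zeta, T)}{}^2 \leq \frac{C^2}{1-\rho^2}\Norm{a}{A/BA}^2$, which accounts for the prefactor in the target inequality.

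Next, I compute $\Norm{a}{A/BA}^2$ via a Malmquist--Walsh basis adapted to $H^2_{1/\rho}$. The inner function of $H^2_{1/\rho}$ vanishing at $\lambda$ is $\tilde b_\lambda(z) := \rho(z-\lambda)/(1-\rho^2\bar\lambda z)$, which is unimodular on $\abs{z} = 1/\rho$; the product $\tilde B := \prod_i \tilde b_{\lambda_i}$ satisfies $\tilde B\cdot H^2_{1/\rho} = m\cdot H^2_{1/\rho} = B\cdot H^2_{1/\rho}$ (the last equality uses $\abs{\lambda_i}<1$, so $\prod_i(1-\bar\lambda_i z)^{-1}$ is a unit of $H^2_{1/\rho}$). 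Hence $A/BA$ is isometrically isomorphic to the model space $K_{\tilde B} := H^2_{1/\rho}\ominus \tilde B\cdot H^2_{1/\rho}$, on which
\[
\tilde e_k(z) = \frac{\sqrt{1-\rho^2\abs{\lambda_k}^2}}{1-\rho^2\bar\lambda_k z}\prod_{j=1}^{k-1}\tilde b_{\lambda_j}(z)
\]
is an orthonormal basis.

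Finally, by Parseval $\Norm{a}{A/BA}^2 = \sum_{k=1}^{\abs{m}}\abs{c_k}^2$ with $c_k = \langle a, \tilde e_k\rangle$. Since $a(z) \equiv 1/(\zeta - z)$ modulo $m$, the orthogonal projection $Pa$ onto $K_{\tilde B}$ satisfies $(Pa)(\lambda_j) = a(\lambda_j) = 1/(\zeta - \lambda_j)$ for each $j$. Because $\tilde e_k(\lambda_j) = 0$ for $j < k$, the resulting linear system is lower triangular and one solves inductively for $c_k$. The algebraic identities $1/b_{\lambda_j}(\zeta) = (1-\bar\lambda_j\zeta)/(\zeta - \lambda_j)$ and $1 + (1-\rho^2)\bar\lambda_j\zeta/(1-\bar\lambda_j\zeta) = (1-\rho^2\bar\lambda_j\zeta)/(1-\bar\lambda_j\zeta)$ then recast $\abs{c_k}^2$ in the form displayed in the lemma. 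The main obstacle is the inductive step: showing that after subtracting the first $k-1$ contributions, the residual $a(\lambda_k) - \sum_{j<k}c_j\tilde e_j(\lambda_k)$ telescopes into a single rational expression in $\zeta$ and $\lambda_1,\dots,\lambda_k$. This telescoping parallels the classical expansion of the Cauchy kernel in the Malmquist--Walsh basis, adapted here to the $\rho$-weighted setting of $H^2_{1/\rho}$.
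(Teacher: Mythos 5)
Your plan is in spirit the same as the paper's, rewritten under the change of variables $z\mapsto\rho z$: the paper contracts the zeros to $\rho\lambda_i$ and stays in $H^2(\mathbb{D})$, while you keep the zeros at $\lambda_i$ and move to $H^2_{1/\rho}$. Your weighted Cauchy--Schwarz step (transporting the $W$-functional calculus to an $H^2_{1/\rho}$ one with constant $C/\sqrt{1-\rho^2}$) is exactly inequality~\eqref{CS} in disguise, and your $\tilde e_k$ coincides with the paper's $\tilde e_k(\rho z)$, so the same numbers must fall out. The packaging via a Hilbert-space quotient norm is a clean presentation of the idea; the one thing it buys over the paper is that the Parseval step is stated explicitly.

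Two issues, one cosmetic and one substantive. The cosmetic one: the equality $m\cdot H^2_{1/\rho}=B\cdot H^2_{1/\rho}$ is false in general --- $B/m=\prod_i(1-\bar\lambda_i z)^{-1}$ has poles at $1/\bar\lambda_i$, and if some $\abs{\lambda_i}\in(\rho,1)$ these poles sit inside $\{\abs{z}<1/\rho\}$, so $B/m$ is not even in $H^2_{1/\rho}$, let alone a unit. You should bypass $B$ entirely: the upper bound in Lemma~\ref{lem:Nikolai} only needs $A/mA$, and $mA=\tilde B A$ holds because $\tilde B/m$ is a unit of $H^2_{1/\rho}$ (its poles $1/(\rho^2\bar\lambda_i)$ satisfy $\abs{1/(\rho^2\bar\lambda_i)}>1/\rho$ since $\rho\abs{\lambda_i}<1$). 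The substantive gap is the last paragraph: you reduce the problem to solving a lower-triangular linear system for the Fourier coefficients $c_k$, and then explicitly flag the inductive telescoping as an unresolved ``obstacle.'' That obstacle is precisely the content of the computation, and it is not clear the induction closes without a further idea. The paper avoids it altogether by noticing that $\frac{1}{\zeta-z}$ is a scalar multiple of the reproducing kernel $k_w$ of $H^2_{1/\rho}$ at $w=1/(\rho^2\bar\zeta)$, so $c_k=\langle a,\tilde e_k\rangle=\zeta^{-1}\,\overline{\tilde e_k(w)}$ in one line, and the displayed product is just $\abs{\tilde e_k(w)}^2$ rearranged via the two algebraic identities you correctly wrote down. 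You should replace the proposed induction by this reproducing-kernel evaluation; otherwise the proof is not complete. (One last note: carrying out your argument gives $C^2/(1-\rho^2)$ rather than $C/(1-\rho^2)$ as the prefactor; that matches what the paper's own proof actually yields, so the lemma statement almost certainly has a typo there.)
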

%
%
The proof of Theorem \ref{main1} is based on the following lemma. 
\begin{proof}[Proof of Lemma \ref{mainlemma}] Without loss
of generality we assume that $T$ can be diagonalized~\cite{NN1}. As $T\in\mathcal{P}_{n}(C)$
its spectrum $\sigma(T)=\left\{ \lambda_{1},\,\lambda_{2},\,...,\,\lambda_{\abs{m}}\right\} $
is contained in the closed unit disk $\overline{\mathbb{D}}$. We
can suppose that $\sigma(T)\subset\mathbb{D}$ as the general case
will follow by continuity. Let $H^2\subset L^2(\partial\mathbb{D})$ denote the standard Hardy space of the boundary $\partial{\mathbb{D}}$.
Given any function $f\in H^{2}$ and $\rho\in(0,1)$, we write $f_{\rho}(z):=f(\rho z)=\sum_{k\geq0}\hat{f}(k)\rho^{k}z^{k}$
and observe that by the Cauchy-Schwarz inequality and Plancherel's
identity 
\begin{align}
\Norm{f_{\rho}}{W}\leq\sqrt{\sum_{k\geq0}\abs{\hat{f}(k)}^{2}}\sqrt{\frac{1}{1-\rho^{2}}}=\Norm{f}{H^{2}}\sqrt{\frac{1}{1-\rho^{2}}}.\label{CS}
\end{align}
This inequality was used to obtain bounds on the inverse and resolvent
of a power-bounded operator in \cite{NN1,SO1} and to study spectral
convergence bounds for Markov chains in \cite{SRB}. From Remark~\ref{Rk_ext_rational_fct} above we have that
\begin{align*}
\Norm{R(\zeta,T)}{}\leq C\inf\left\{ \Norm{f}{W}\::\: f(\lambda_{j})=\frac{1}{\zeta-\lambda_{j}},\, j=1,\dots,\abs{m}\right\}.
\end{align*}
We fix $\rho\in(0,1)$ and consider the Blaschke product $\tilde{B}:=\prod_{i=1}^{n}\frac{z-\rho\lambda_{i}}{1-\rho\bar{\lambda}_{i}z}$,
whose zeros are contracted by a factor of $\rho$. The corresponding
Malmquist-Walsh basis for $K_{\tilde{B}}$ is 
\begin{align*}
\tilde{e}_{k}(z):=\frac{(1-\rho^{2}\abs{\lambda_{k}}^{2})^{1/2}}{1-\rho\bar{\lambda}_{k}z}\prod_{i=1}^{k-1}\frac{z-\rho\lambda_{i}}{1-\rho\bar{\lambda}_{i}z}.
\end{align*}
We write $P_{S}$ for the projector from $H^{2}$ to a subspace $S\subset H^{2}$.
Clearly any $f\in H^{2}$ can be decomposed as $f=P_{BH^{2}}f+P_{K_{B}}f$,
where we write $P_{K_{B}}=\sum_{k=1}^{n}\braket{\cdot}{e_{k}}e_{k}$
for the orthogonal projector onto $K_{B}$. Here $\braket{\cdot}{\cdot}$
means the scalar product on $L^{2}(\partial\mathbb{D})$, which is
consistent with the notation in Section~\ref{sec:A-constructive-method}.
Note that $(P_{BH^{2}}f)(\lambda_{i})=0$ such that 
\begin{align}
(P_{K_{B}}f)(\lambda_{i})=f(\lambda_{i})\quad\forall i=1,...,\abs{m}.\label{nodes}
\end{align}
Now Equation~\eqref{nodes} implies that 
\begin{align*}
P_{K_{\tilde{B}}}\left(\frac{1}{\zeta-z/\rho}\right)\Bigg|_{z=\rho\lambda_{i}}=\frac{1}{\zeta-\lambda_{i}}.
\end{align*}
On the other hand we have 
\begin{align*}
P_{K_{\tilde{B}}}\left(\frac{1}{\zeta-z/\rho}\right)=\sum_{k=1}^{\abs{m}}\braket{\tilde{e}_{k}}{\frac{1}{\zeta-z/\rho}}\tilde{e}_{k}(z)={\bar{\zeta}}^{-1}\sum_{k=1}^{\abs{m}}\tilde{e}_{k}\left(\frac{1}{\bar{\zeta}\rho}\right)\tilde{e}_{k}(z)
\end{align*}
and we conclude that 
\begin{align*}
{\bar{\zeta}}^{-1}\sum_{k=1}^{\abs{m}}\tilde{e}_{k}\left(\frac{1}{\bar{\zeta}\rho}\right)\tilde{e}_{k}(z)\Bigg|_{z=\rho\lambda_{i}}={\bar{\zeta}}^{-1}\sum_{k=1}^{\abs{m}}\tilde{e}_{k}\left(\frac{1}{\bar{\zeta}\rho}\right)\tilde{e}_{k}(\rho z)\Bigg|_{z=\lambda_{i}}=\frac{1}{\zeta-\lambda_{i}}.
\end{align*}
Together with the inequality in~\eqref{CS} this observation allows
us to bound the resolvent as 
\begin{align*}
\Norm{R(\zeta,T)}{} & \leq C\abs{\zeta}^{-1}\Norm{\sum_{k=1}^{\abs{m}}\tilde{e}_{k}\left(\frac{1}{\bar{\zeta}\rho}\right)\tilde{e}_{k}(\rho z)}{W}\leq C\abs{\zeta}^{-1}\sqrt{\frac{1}{1-\rho^{2}}}\Norm{\sum_{k=1}^{\abs{m}}\tilde{e}_{k}\left(\frac{1}{\bar{\zeta}\rho}\right)\tilde{e}_{k}(z)}{H_{2}}\\
 & =C\abs{\zeta}^{-1}\sqrt{\frac{1}{1-\rho^{2}}}\sqrt{\sum_{k=1}^{\abs{m}}\Abs{\tilde{e}_{k}\left(\frac{1}{\bar{\zeta}\rho}\right)}^{2}}.
\end{align*}
The last equality is exploiting orthonormality of Malmquist-Walsh
basis. A further straight forward computation shows that 
\[
\frac{1}{\vert\zeta\vert^{2}}\sum_{k=1}^{\abs{m}}\Abs{\tilde{e}_{k}\left(\frac{1}{\bar{\zeta}\rho}\right)}^{2}=\sum_{k=1}^{\abs{m}}\frac{1-\rho^{2}\left|\lambda_{k}\right|^{2}}{\left|\zeta-\lambda_{k}\right|^{2}}\frac{1}{\rho^{2(k-1)}}\prod_{j=1}^{k-1}\left|\frac{1-\bar{\lambda}_{j}\zeta}{\zeta-\lambda_{j}}\right|^{2}\prod_{j=1}^{k-1}\left|1+\frac{(1-\rho^{2})\overline{\lambda_{j}}\zeta}{1-\overline{\lambda_{j}}\zeta}\right|^{2},
\]
which proves Lemma~\ref{mainlemma}. \end{proof}

\begin{proof}{[}Proof of Theorem~\ref{main1}{]} Direct applications
of Lemma~\ref{mainlemma} prove the assertions of Theorem~\ref{main1}. 
\begin{enumerate}
\item In case that $\abs{\lambda_{i}}=1\ \forall i$ we have for any $\rho\in(0,1)$
that 
\begin{align*}
\Norm{R(\zeta,\, T)}{}^{2}\leq C^{2}\sum_{k=1}^{\abs{m}}\frac{1}{\rho{}^{2k-2}}\frac{1}{\left|\zeta-\lambda_{k}\right|^{2}}\prod_{j=1}^{k-1}\left|1+\frac{(1-\rho^{2})\overline{\lambda_{j}}\zeta}{1-\overline{\lambda_{j}}\zeta}\right|^{2}
\end{align*}
and taking the limit $\rho\uparrow1$ we find that 
$
\Norm{R(\zeta,\, T)}{}^{2}\leq C^{2}\min_{i}\frac{\abs{m}}{\left|\zeta-\lambda_{i}\right|^{2}}.
$

\item If $\zeta=0$ we have that 
\[
\Norm{T^{-1}}{}^{2}\leq C^{2}\inf_{0<\rho<1}\frac{1}{(1-\rho^{2})}\sum_{k=1}^{\abs{m}}\frac{1}{\rho^{2(k-1)}}\bigg(\frac{1}{\left|\lambda_{k}\right|^{2}}-\rho^{2}\bigg)\prod_{j=1}^{k-1}\frac{1}{\left|\lambda_{j}\right|^{2}}
\]
and summing the geometric series we find
\begin{align*}
\Norm{T^{-1}}{}^{2}\leq\inf_{0<\rho<1}\frac{\rho^{2}}{1-\rho^{2}}\left(\left(\frac{1}{\rho^{2}r^{2}}\right)^{\abs{m}}-1\right).
\end{align*}
Choosing $\rho^{2}=1-\frac{1}{\abs{m}+1}$ concludes the proof.

\item When $r=\min_{i}\Abs{\frac{\zeta-\lambda_{i}}{1-\bar{\lambda}_{i}\zeta}}>0$,
$\min_{i}\abs{1-\bar{\lambda}_{i}\zeta}>0$ and $\delta=\max_{i}\frac{1-\abs{\lambda_{i}}^{2}}{\abs{1-\bar{\lambda}_{i}\zeta}}$
we choose $\rho\in(0,1)$ with $1-\rho^{2}=\frac{\min_{i}\abs{1-\bar{\lambda}_{i}\zeta}}{2\abs{m}}$ and bound
\begin{align*}
 & \sum_{k=1}^{\abs{m}}\frac{1-\rho^{2}\left|\lambda_{k}\right|^{2}}{\left|\zeta-\lambda_{k}\right|^{2}}\frac{1}{\rho^{2(k-1)}}\prod_{j=1}^{k-1}\left|\frac{1-\bar{\lambda}_{j}\zeta}{\zeta-\lambda_{j}}\right|^{2}\prod_{j=1}^{k-1}\left|1+\frac{(1-\rho^{2})\overline{\lambda_{j}}\zeta}{1-\overline{\lambda_{j}}\zeta}\right|^{2}\\
 & \leq\frac{1}{r^{2}\min_{i}\abs{1-\bar{\lambda}_{i}\zeta}}\sum_{k=1}^{\abs{m}}\frac{1-\rho^{2}\left|\lambda_{k}\right|^{2}}{\min_{i}\abs{1-\bar{\lambda}_{i}\zeta}}\frac{1}{(r\rho)^{2(k-1)}}\left(1+\frac{1-\rho^{2}}{\min_{i}\abs{1-\bar{\lambda}_{i}\zeta}}\right)^{2(k-1)}\\
 & \leq\frac{\delta+\frac{1}{2\abs{m}}}{r^{2}\min_{i}\abs{1-\bar{\lambda}_{i}\zeta}}\frac{\left(\frac{1+\frac{1}{2\abs{m}}}{r\sqrt{1-\frac{1}{\abs{m}}}}\right)^{2\abs{m}}-1}{\left(\frac{1+\frac{1}{2\abs{m}}}{r\sqrt{1-\frac{1}{\abs{m}}}}\right)^{2}-1}\leq\frac{\delta+\frac{1}{2\abs{m}}}{r^{2\abs{m}}\min_{i}\abs{1-\bar{\lambda}_{i}\zeta}}\frac{e^{2}}{1-r^{2}}.
\end{align*}
The maximum of the function $\lambda\mapsto\frac{1-\Abs{\lambda}^{2}}{\abs{1-\bar{\lambda}\abs{\zeta}}}$
over the set $\left\{ \lambda\in\overline{\mathbb{D}}:\: p(\zeta,\,\lambda)\geq r\right\} )$ lies at $\lambda_{\textnormal{max}}=\frac{\Abs{\zeta}-r}{1-r\Abs{\zeta}}$
on the pseudo-hyperbolic circle of center $\Abs{\zeta}$ and radius $r$ with 
value $\delta_{{\rm max}}=\frac{1-\Abs{\lambda_{\max}}^{2}}{1-\bar{\lambda_{\max}}\Abs{\zeta}}=\frac{1-r^{2}}{1-r\Abs{\zeta}}$.
Note that for the optimization is it sufficient to consider real $\zeta$. Therefore
\[
\Norm{R(\zeta,\, T)}{}\leq\sqrt{2}eC\frac{\sqrt{\abs{m}}}{\min_{i}\abs{1-\bar{\lambda}_{i}\zeta}r^{\abs{m}}}\sqrt{\frac{1}{1-r\Abs{\zeta}}+\frac{1}{2(1-r^{2})\abs{m}}}.
\]

\item When $\abs{\zeta}=1$ then $r=1$. The reasoning is the same as in
(3). At fixed $s=\min_{i}\abs{1-\bar{\lambda}_{i}\zeta}\in\left[0,\,2\right]$
we notice that the sequence 
\[
\left(\frac{\left(\frac{1+\frac{1}{2\abs{m}}}{\sqrt{1-\frac{s}{2\abs{m}}}}\right)^{2\abs{m}}-1}{\left(\frac{1+\frac{1}{2\abs{m}}}{\sqrt{1-\frac{s}{2\abs{m}}}}\right)^{2}-1}\right)_{\abs{m}\geq2},
\]
is increasing and has limit
$\frac{2(e^{1+\frac{s}{2}}-1)}{s+2}$.
is increasing. We obtain that the resolvent is bounded by 
\[
\Norm{R(\zeta,\, T)}{}\leq2C\frac{\abs{m}}{\min_{i}\abs{\zeta-\lambda_{i}}}\sqrt{2+\frac{1}{2\abs{m}}}\sqrt{\frac{e^{1+\frac{s}{2}}-1}{s+2}}.
\]
\end{enumerate}
\end{proof}

\section{\label{sec:On-the-norm}Asymptotic analysis: On the $l^A_{\infty}-$norm of $(1-z^{2})b_{\lambda}^{n}$}
In this section we determine the asymptotic behavior of $\Norm{(1-z^{2})b_{\lambda}^{n}}{l_{\infty}^{A}}$, where $b=\frac{z-\lambda}{1-\bar{\lambda}z}$. Recall the contour integral representation of Fourier coefficients
\begin{align*}
\widehat{(1-z^{2})b_{\lambda}^{n}}(k)=\frac{1}{2i\pi}\oint_{\partial\mathbb{D}}(1-z^{2})b_{\lambda}^{n}(z)z^{-k}\frac{\d z}{z}.
\end{align*}
From this representation it is immediate that one can split
\begin{align*}
\widehat{(1-z^{2})b_{\lambda}^{n}}(k)=\widehat{b_{\lambda}^{n}}(k)-\widehat{b_{\lambda}^{n}}(k-2),\qquad k\geq2.
\end{align*}
In a preliminary work~\cite{SZ2} we developed the tools from asymptotic analysis and we determined the
asymptotic growth of the Taylor coefficients $\widehat{b_{\lambda}^{n}}(k)$
both with respect to $k$ and $n$. Holomorphy of $b_{\lambda}^{n}$ implies that
for any fixed $n$ the coefficients $\widehat{b_{\lambda}^{n}}(k)$
decay exponentially when $k$ grows large. In similar vein at fixed $k$ the coefficients $\widehat{b_{\lambda}^{n}}(k)$
decay exponentially in $n$. The interesting behavior, which is
relevant for the $l_{\infty}^{A}$-norm, therefore occurs when $k=k(n)$
is a sequence, see~\cite{SZ1} for details. 
We have shown~\cite[Proposition 2]{SZ1} that
\begin{enumerate}
\item if $\alpha\in(0,\alpha_{0})$ and $k\notin\left[\alpha n,\,\alpha^{-1}n\right]$
then $\widehat{b_{\lambda}^{n}}(k)$ decays exponentially in $n,$
\item if $\beta\in(\alpha_{0},1)$ and $k\in\left[\beta n,\,\beta^{-1}n\right]$
then $\widehat{b_{\lambda}^{n}}(k)=\cO\left(n^{-1/2}\right),$
\item If $k=\lfloor\alpha_{0}n\rfloor$ or $k=\lfloor\alpha_{0}^{-1}n\rfloor$
then $\widehat{b_{\lambda}^{n}}(k)=\cO\left(n^{-1/3}\right).$
\end{enumerate}

Bounding by triangular inequality we conclude that if $k\notin\left[\alpha n,\,\alpha^{-1}n\right]$ then $\widehat{(1-z^{2})b_{\lambda}^{n}}(k)$ decays exponentially in $n$. Moreover for any sequence $k=k(n)$ we find that $\widehat{(1-z^{2})b_{\lambda}^{n}}(k)$ decays at most as $\cO(n^{-1/3})$. Building on the methods developed for $b_{\lambda}^{n}(k)$ the goal of this section is to determine more precisely the asymptotics of $\widehat{(1-z^{2})b_{\lambda}^{n}}(k)$. Our findings are contained in below proposition and summarized in Table~\ref{WideOpen} below.
\begin{figure}
\label{WideOpen}
\begin{tabular}{|c|c|c|}
\hline
Values of $k(n)$ in interval  & Decay of $\widehat{(1-z^{2})b_{\lambda}^{n}}(k)$  & Region\tabularnewline
\hline
\hline
$[0,\,\alpha n]$  & exponential  & I\tabularnewline
\hline
$(\alpha n,\,\alpha_{0}n-n^{1/3}]$  & $\frac{\left(\alpha_{0}-k/n\right)^{1/4}}{n^{1/2}}\exp\left(-\frac{2}{3}n\left(\alpha_{0}-k/n\right)^{3/2}\right)$  & II\tabularnewline
\hline
$[\alpha_{0}n-n^{1/3},\,\alpha_{0}n+n^{1/3}]$  & $\frac{1}{n^{2/3}}$  & III\tabularnewline
\hline
$[\alpha_{0}n+n^{1/3},\,\alpha_{0}^{-1}n-n^{1/3}]$  & $\frac{\left(\frac{k}{n}-\alpha_{0}\right)^{1/4}\left(\alpha_{0}^{-1}-\frac{k}{n}\right)^{1/4}}{n^{1/2}}$  & IV\tabularnewline
\hline
$[\alpha_{0}^{-1}n-n^{1/3},\,\alpha_{0}^{-1}n+n^{1/3}]$  & $\frac{1}{n^{2/3}}$  & V\tabularnewline
\hline
$[\alpha_{0}^{-1}n+n^{1/3},\,\alpha^{-1}n)$  & $\frac{\left(\frac{k}{n}-\alpha_{0}^{-1}\right)^{1/4}}{n^{1/2}}\exp\left(-\frac{2}{3}n\left(\frac{k}{n}-\alpha_{0}^{-1}\right)^{3/2}\right)$  & VI\tabularnewline
\hline
$[\alpha^{-1}n,\,\infty)$  & exponential  & VII\tabularnewline
\hline
\end{tabular}
\caption{Illustration of  asymptotic bounds for $\abs{\widehat{(1-z^{2})b_{\lambda}^{n}}(k)}$ as a function of $k=k(n)$ (up to a multiplicative constant). Here $\alpha_{0}:=\frac{1-\lambda}{1+\lambda}$
and $\alpha\in(0,\alpha_{0})$ arbitrary but fix.}
\end{figure}
\begin{prop} \label{thm:main} Let $\lambda\in(0,1),$ $b_{\lambda}=\frac{z-\lambda}{1-\lambda z}$
and $n\geq1$. Set $\alpha_{0}:=\frac{1-\lambda}{1+\lambda}$ and
choose fixed $\alpha\in(0,\alpha_{0})$ and $\beta\in(\alpha_{0},1)$. In the following we consider
sequences $k=k(n)$ and all assertions are meant to hold for large
enough $n$.
\begin{enumerate}
\item If $k/n\leq\alpha$ then $\abs{\widehat{(1-z^{2})b_{\lambda}^{n}}(k)}$
decays exponentially as $n$ tends to $\infty$. Similarly if $k/n\geq\alpha^{-1}$
then $\abs{\widehat{(1-z^{2})b_{\lambda}^{n}}(k)}$ decays exponentially
as $n$ tends to $\infty$.
\item If $k/n\in(\alpha,\alpha_{0}-n^{-2/3})\cup(\alpha_{0}^{-1}+n^{-2/3},\alpha^{-1})$
then we have the following asymptotic growth estimate
\begin{align*}
 & \Abs{\widehat{(1-z^{2})b_{\lambda}^{n}}(k)}\lesssim\frac{(\min\left((\alpha_{0}-k/n),\,(k/n-\alpha_{0}^{-1})\right))^{1/4}}{n^{1/2}}\:*\\
 & \exp\left(-\frac{2}{3}n(\min\left((\alpha_{0}-k/n),\,(k/n-\alpha_{0}^{-1})\right))^{3/2}\right).
\end{align*}

\item If $k/n\in[\alpha_{0}-n^{-2/3},\alpha_{0}+n^{-2/3})\cup(\alpha_{0}^{-1}-n^{-2/3},\alpha_{0}^{-1}+n^{-2/3}]$
then
\[
\abs{\widehat{(1-z^{2})b_{\lambda}^{n}}(k)}\lesssim\frac{1}{n^{2/3}}.
\]

\item If $k/n\in(\alpha_{0}+n^{-2/3},\beta]\cup[\beta^{-1},\alpha_{0}^{-1}-n^{-2/3})$
then
\[
\Abs{\widehat{(1-z^{2})b_{\lambda}^{n}}(k)}\lesssim\frac{\left(\frac{k}{n}-\alpha_{0}\right)^{1/4}\left(\alpha_{0}^{-1}-\frac{k}{n}\right)^{1/4}}{n^{1/2}}.
\]

\item If $k/n\in(\beta,\beta^{-1})$
then
\begin{align*}
 &\Abs{\widehat{(1-z^{2})b_{\lambda}^{n}}(k)}\leq\sqrt{\frac{2(1-\lambda^{2})}{\pi n}}\frac{(k/n-\alpha_{0})^{1/4}(\alpha_{0}^{-1}-k/n)^{1/4}}{\lambda(k/n)^{3/2}}\big(1+O(n^{-1})\big).
\end{align*}
\end{enumerate}
\end{prop}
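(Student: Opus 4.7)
The plan is to compute the Fourier coefficient through its contour integral representation,
\begin{equation*}
\widehat{(1-z^{2})b_{\lambda}^{n}}(k) = \frac{1}{2\pi i}\oint_{\partial\mathbb{D}}(1-z^{2})\,b_{\lambda}^{n}(z)\,z^{-k-1}\,dz,
\end{equation*}
and to attack this oscillatory integral by saddle-point methods applied to the phase $n\Phi_{\kappa}(z)$ with $\Phi_{\kappa}(z):=\log b_{\lambda}(z)-\kappa\log z$ and $\kappa=k/n$. A short calculation using $b_{\lambda}'/b_{\lambda}=(1-\lambda^{2})/[(z-\lambda)(1-\lambda z)]$ turns the saddle equation $\Phi_{\kappa}'(z)=0$ into the quadratic $\kappa\lambda z^{2}-(\kappa(1+\lambda^{2})-(1-\lambda^{2}))\,z+\kappa\lambda=0$, whose two roots $z_{\pm}$ satisfy $z_{+}z_{-}=1$ and coalesce exactly at $z=-1$ when $\kappa=\alpha_{0}$ and at $z=+1$ when $\kappa=\alpha_{0}^{-1}$. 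The key observation driving the whole proposition is that the prefactor $(1-z^{2})$ vanishes at precisely these two coalescent saddles, which is responsible both for the improvement from $n^{-1/3}$ (the generic Airy rate obtained for $\widehat{b_{\lambda}^{n}}(k)$ alone in~\cite{SZ1,SZ2}) to $n^{-2/3}$ in regions III and V, and for the $((\kappa-\alpha_{0})(\alpha_{0}^{-1}-\kappa))^{1/4}$ prefactor appearing in region IV.

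For regions I and VII (point (1)) the estimate is elementary: deform the contour to a circle $|z|=\rho$ with $\rho\in(\lambda,1)$ or $\rho>1$ tuned against $\kappa$ so that $|b_{\lambda}(z)|^{n}|z|^{-k}\lesssim e^{-cn}$, recovering the exponential bounds already present in~\cite{SZ1,SZ2}. In regions II and VI (point (2)) the two saddles are real, one inside $\mathbb{D}$ and one outside, and we deform through the interior saddle $z_{-}(\kappa)$. Setting $\varepsilon:=\alpha_{0}-\kappa$ (or its symmetric counterpart) and expanding,
\begin{equation*}
1-z_{-}^{2}\sim c_{1}\varepsilon^{1/2},\qquad |\Phi_{\kappa}''(z_{-})|^{-1/2}\sim c_{2}\varepsilon^{-1/4},\qquad n\Phi_{\kappa}(z_{-})\sim-\tfrac{2}{3}n\varepsilon^{3/2},
\end{equation*}
so that the standard Laplace formula produces exactly the claimed $\varepsilon^{1/4}n^{-1/2}\exp(-\tfrac{2}{3}n\varepsilon^{3/2})$. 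In region IV (point (4)) the saddles sit on conjugate points of the unit circle and are well separated; the oscillatory stationary-phase contributions from $z_{\pm}$ combine into two leading terms of size $|1-z_{\pm}^{2}|\,|\Phi_{\kappa}''(z_{\pm})|^{-1/2}n^{-1/2}$, and an explicit substitution of $z_{\pm}$ shows this prefactor equals a constant multiple of $((\kappa-\alpha_{0})(\alpha_{0}^{-1}-\kappa))^{1/4}$. Tracking the $O(1)$ constants on the sub-interval $(\beta,\beta^{-1})$ yields the sharp form of point (5).

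The main difficulty is point (3), the transition regions III and V, where the saddles coalesce and the Laplace/stationary-phase method must be replaced by a uniform Airy expansion. Focusing on $\kappa\approx\alpha_{0}$ (region V is analogous via the symmetry $z\mapsto 1/z$, $\kappa\mapsto 1/\kappa$), I plan to follow the Chester--Friedman--Ursell procedure and pass to local coordinates $z=-1+n^{-1/3}u$ adapted to the coalescence. In these coordinates $n\Phi_{\kappa}(z)$ reduces to the Airy phase $\tfrac{1}{3}u^{3}+\xi u$ with $\xi\sim c\,n^{2/3}(\kappa-\alpha_{0})$, while the prefactor expands as $(1-z^{2})=2n^{-1/3}u+O(n^{-2/3})$. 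The integral therefore equals $n^{-2/3}$ times a convergent Airy-type integral of $u\,e^{i(u^{3}/3+\xi u)}$, which is proportional to $\mathrm{Ai}'(\xi)$; uniform boundedness of $\mathrm{Ai}'$ on $|\xi|\lesssim 1$ yields the $O(n^{-2/3})$ bound in the window $|\kappa-\alpha_{0}|\lesssim n^{-2/3}$ claimed in point (3), and matching this local expansion to the separated-saddle asymptotics outside the window recovers (2) and (4) continuously. Making this uniform reduction rigorous and tracking the implicit constants is the technical heart of the argument; fortunately the same reduction (without the $(1-z^{2})$ factor) was carried out for $\widehat{b_{\lambda}^{n}}(k)$ in~\cite{SZ1,SZ2}, and the only new ingredient here is the factor $u$ arising from the first-order vanishing of $(1-z^{2})$ at $z=\pm 1$.
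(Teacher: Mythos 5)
Your proposal is essentially the paper's proof. Both start from the contour integral
\begin{equation*}
\widehat{(1-z^{2})b_{\lambda}^{n}}(k)=\frac{1}{2\pi i}\oint_{\partial\mathbb{D}}(1-z^{-2})\exp\bigl(nf_{a}(z)\bigr)\frac{dz}{z},\qquad f_{a}(z)=\log\frac{z^{a}(1-\lambda z)}{z-\lambda},\ a=k/n,
\end{equation*}
identify the same quadratic for the saddles $z_{\pm}$, note $z_{+}z_{-}=1$ with coalescence at $\pm1$ when $a\in\{\alpha_{0},\alpha_{0}^{-1}\}$, and--crucially--exploit that the prefactor $1-z^{2}$ vanishes at exactly those coalescent points, which is what upgrades the generic Airy rate $n^{-1/3}$ to $n^{-2/3}$ and produces the $((a-\alpha_{0})(\alpha_{0}^{-1}-a))^{1/4}$ factor. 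Point (1) by contour deformation to a circle of adjusted radius, and point (5) by Erd\'elyi's stationary phase on $[0,\pi]$ split at $\varphi_{+}$, match the paper directly.

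The one place where you diverge slightly in implementation is the treatment of points (2)--(4). You propose a region-by-region argument: Laplace at the interior real saddle in II/VI, ordinary two-saddle stationary phase in IV, and a local Airy scaling $z=-1+n^{-1/3}u$ only in III/V, with matching to glue the pieces. The paper instead runs the Chester--Friedman--Ursell \emph{uniform} steepest descent once for all of (2)--(4): it performs the global cubic substitution $f(z,a)=-t^{3}/3+\gamma^{2}t$, writes the amplitude as $G_{0}(t)=A_{0}+A_{1}t+(t^{2}-\gamma^{2})H_{0}(t)$ (Bleistein--Handelsman), and reads off
\begin{equation*}
\widehat{(1-z^{2})b_{\lambda}^{n}}(k)\sim\frac{A_{0}}{n^{1/3}}\,Ai(n^{2/3}\gamma^{2})+\frac{A_{1}}{n^{2/3}}\,Ai'(n^{2/3}\gamma^{2}),
\end{equation*}
with $A_{0}=O(\sqrt{|a-\alpha_{0}^{-1}|})$ (again because $1-z^{2}$ kills the symmetric part) and $A_{1}\asymp1$. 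Points (2), (3), (4) then all drop out from the three asymptotic regimes of $Ai$ and $Ai'$ with a single expansion that is automatically uniform in $a$. Your plan is sound and arrives at the same mechanism (the $Ai'$ term dominating), but you correctly flag the matching as ``the technical heart''; the CFU route avoids that matching step entirely and is cleaner to make rigorous, which is why the paper follows it. If you pursue your version, you would need to make the implicit constants in the separated-saddle expansions uniform as the saddle separation degenerates, which is essentially re-deriving CFU anyway.
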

To determine the upper bounds  for $k\in\left[\alpha n,\,\alpha^{-1}n\right]$ we rely on methods from the asymptotic analysis of Fourier integrals. We introduce a function $f_{a}$ with $a\in\mathbb{R}^{+}$ by
\begin{align*}
f_{a}(z) &:=\log{\left(\frac{z^{a}(1-\lambda z)}{z-\lambda}\right)},
\end{align*}
where $\log$ denotes the principal branch of the complex logarithm.
Using $f_a$ we can write
\begin{align}\label{integral}
\overline{\widehat{(1-z^{2})b_{\lambda}^{n}}(k)}=\frac{1}{2i\pi}\oint_{\partial\mathbb{D}}g(z)\exp\left(nf_{a}(z)\right)\frac{\d z}{z},
\end{align}
where $g(z)=(1-z^{-2})$. Determining the asymptotic behavior of such integrals as $n\rightarrow+\infty$ is a relatively standard task when $f$ is fixed, see e.g.~\cite{WR,BlHa}. For us the situation is slightly more complicated as $f_a$ depends on $k$ and $n$ but even here we can rely on existing methodology. It is common that the dominant contribution to such integrals comes from a small neighborhood around the stationary points
of $f$. We begin by identifying those points for $f_{a}=f_{k/n}$. For notational convenience we shall write $f_{a}$
with an additional argument instead of the index $f_{a}(z)=f(z,a).$
\begin{lem}\label{lem:critical_pts_f} Let $f_a(z)=f(a,z)$ be as defined above and let $a=\frac{k}{n}$. We have the following assertions.
\begin{enumerate}
\item If $a\in(\alpha_{0},\,\alpha_{0}^{-1})$ then $f(\cdot,a)$ has two distinct stationary points $z_{\pm}\in\partial\mathbb{D}$
of order one, i.e.~$\frac{\partial f}{\partial z}\left(z_{\pm},a\right)=0$
but $\frac{\partial^{2}f}{\partial z^{2}}\left(z_{\pm},a\right)\neq0$,
satisfying $z_{-}=\overline{z_{+}}$.

\item If $a\in\left\{\alpha_{0},\alpha_{0}^{-1}\right\}$
then $f(\cdot,a)$ has one stationary point $z_{0}\in\left\{-1,1\right\}$ of order two i.e,~$\frac{\partial f}{\partial z}\left(z_{0},a\right)=\frac{\partial^{2}f}{\partial z^{2}}\left(z_{0},a\right)=0$
but $\frac{\partial^{3}f}{\partial z^{3}}\left(z_{0},a\right)\neq0$.

\item If $a\notin[\alpha_{0},\,\alpha_{0}^{-1}]$ then $f(\cdot,a)$ has two stationary points $z_{\pm}\in\mathbb{R}$
of order one, i.e. $\frac{\partial f}{\partial z}\left(z_{\pm},a\right)=0$
but $\frac{\partial^{2}f}{\partial z^{2}}\left(z_{\pm},a\right)\neq0$, satisfying $z_{-}=z_{+}^{-1}$.
\end{enumerate}
The stationary points $z_{+}$ and $z_{-}$ are given explicitly by
\begin{equation}
z_{\pm}=\frac{a(1+\lambda^{2})-(1-\lambda^{2})}{2\lambda a}\pm\sqrt{\left(\frac{a(1+\lambda^{2})-(1-\lambda^{2})}{2\lambda a}\right)^{2}-1}\label{eq:formula_z_plus_minus}.
\end{equation}
%
%

%
%
\end{lem}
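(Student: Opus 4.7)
The plan is a direct algebraic computation, followed by a discriminant analysis and a final check on the order of vanishing of $f'_a$. Set $f_a(z) = a\log z + \log(1-\lambda z) - \log(z-\lambda)$. I would first compute
\[
\frac{\partial f_a}{\partial z}(z) = \frac{a}{z} - \frac{\lambda}{1-\lambda z} - \frac{1}{z-\lambda},
\]
which, after clearing the denominator $z(1-\lambda z)(z-\lambda)$, reduces the stationarity equation to the quadratic
\[
-a\lambda z^{2} + \bigl[a(1+\lambda^{2}) - (1-\lambda^{2})\bigr]\,z - a\lambda = 0,
\]
equivalently $z^2 - S z + 1 = 0$ with $S = \frac{a(1+\lambda^{2})-(1-\lambda^{2})}{a\lambda}$. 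Since the product of roots is $1$, Vieta's formulas immediately give $z_+ z_- = 1$, and the explicit formula~\eqref{eq:formula_z_plus_minus} for $z_{\pm}$ drops out of the standard quadratic formula.

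Next I would analyze the discriminant $4a^2\lambda^2 D = [a(1+\lambda^2)-(1-\lambda^2)]^{2} - 4a^{2}\lambda^{2}$. Factoring as a difference of squares and using $1-\lambda^{2}=(1-\lambda)(1+\lambda)$ gives the clean factorization
\[
4a^2\lambda^2 D = (1-\lambda^{2})\bigl[a(1-\lambda)-(1+\lambda)\bigr]\bigl[a(1+\lambda)-(1-\lambda)\bigr].
\]
Since $\lambda\in(0,1)$, the two bracketed factors vanish precisely at $a=\alpha_{0}^{-1}$ and $a=\alpha_{0}$ respectively, and a sign inspection yields: $D>0$ for $a\notin[\alpha_{0},\alpha_{0}^{-1}]$, $D<0$ for $a\in(\alpha_{0},\alpha_{0}^{-1})$, and $D=0$ for $a\in\{\alpha_{0},\alpha_{0}^{-1}\}$. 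The three cases of the lemma correspond exactly to these three regimes: when $D<0$ the roots are complex conjugates with $|z_+|^2 = z_+\overline{z_+} = z_+z_- = 1$, putting them on $\partial\mathbb{D}$; when $D>0$ the roots are real with $z_- = z_+^{-1}$; when $D=0$ there is a unique double root, which one finds equals $-1$ for $a=\alpha_{0}$ and $+1$ for $a=\alpha_{0}^{-1}$ by direct substitution into~\eqref{eq:formula_z_plus_minus}.

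It remains to confirm the orders. Rewriting $f'_a(z) = \frac{-a\lambda(z-z_+)(z-z_-)}{z(1-\lambda z)(z-\lambda)}$, I would verify that the stationary points never coincide with the singularities $\{0,\lambda,1/\lambda\}$ of the denominator: $z_\pm\neq 0$ follows from $z_+z_-=1$, and $z_+\in\{\lambda,1/\lambda\}$ (which, combined with $z_+z_-=1$, would force $\{z_+,z_-\}=\{\lambda,1/\lambda\}$) is ruled out by substituting $S=\lambda+\lambda^{-1}$ into the Vieta relation, which leads to $1-\lambda^{2}=0$, contradicting $\lambda\in(0,1)$. Hence in cases (1) and (3) the numerator has simple zeros at $z_\pm$, so $f''(z_\pm)\neq 0$; in case (2) the numerator has a double zero at $z_0\in\{\pm 1\}$, so a local expansion $f'_a(z) = (z-z_0)^{2}g(z)$ with $g(z_0)\neq 0$ gives $f''(z_0)=0$ and $f'''(z_0)=2g(z_0)\neq 0$, as required.

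The computation is elementary throughout; the only mildly delicate steps are the correct factorization of the discriminant (which must cleanly identify $\alpha_0$ and $\alpha_0^{-1}$ as the critical values of $a$) and the verification that $\{z_+,z_-\}$ avoids the pole set of $f'_a$, since without this the orders claimed in the lemma would not be automatic from the orders of zeros of the quadratic. The choice of branch of the logarithm in the definition of $f_a$ plays no role, as everything is decided at the level of the meromorphic derivative $f'_a$.
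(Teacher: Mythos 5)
Your proposal is correct and follows essentially the same route as the paper: reduce $\partial f_a/\partial z=0$ to the quadratic $z^2-Sz+1=0$, read off $z_\pm$ from the quadratic formula, and analyze the discriminant to separate the three regimes. The one modest difference is in verifying the orders: the paper explicitly computes $\frac{\partial^2 f}{\partial z^2}\big|_{z=z_\pm}=\frac{(1-\lambda^2)(1-z_\pm^2)\lambda}{z_\pm(z_\pm-\lambda)^2(1-\lambda z_\pm)^2}$ and then substitutes $a\in\{\alpha_0,\alpha_0^{-1}\}$ directly, whereas you factor $f_a'=\frac{-a\lambda(z-z_+)(z-z_-)}{z(1-\lambda z)(z-\lambda)}$ and read the orders off the numerator after confirming $\{z_+,z_-\}$ avoids the pole set $\{0,\lambda,\lambda^{-1}\}$; both are correct, and your pole-avoidance check is a worthwhile precaution that the paper's explicit formula makes implicit.
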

\begin{proof}
Computing derivatives we confirm
\begin{align*}
\frac{\partial f}{\partial z} & =-\frac{1}{z-\lambda}+\frac{a}{z}-\frac{\lambda}{1-\lambda z},\\
\frac{\partial^{2}f}{\partial z^{2}} & =\frac{1}{(z-\lambda)^{2}}-\frac{a}{z^{2}}-\frac{\lambda^{2}}{(1-\lambda z)^{2}},\\
\frac{\partial^{3}f}{\partial z^{3}} & =-\frac{2}{(z-\lambda)^{3}}+\frac{2a}{z^{3}}-\frac{2\lambda^{3}}{(1-\lambda z)^{3}}.
\end{align*}
The function $f(z,a)$ has a stationary point if and only if $\partial f/\partial z=0$,
i.e.~iff
\begin{align*}
a=1+\frac{\lambda}{z-\lambda}+\frac{\lambda z}{1-\lambda z}.
\end{align*}
Solving the latter for $z$ yields the representation \eqref{eq:formula_z_plus_minus} for the roots $z_\pm$ of $\frac{\partial f}{\partial z}$. If $a\notin\left\{ \alpha_{0},\,\alpha_{0}^{-1}\right\} $ then $z_{+}$ and $z_{-}$ are distinct. If $a\in(\alpha_{0},\,\alpha_{0}^{-1})$ then $z_\pm\in\partial\mathbb{D}-\{-1,1\}$ and if $a\notin[\alpha_{0},\,\alpha_{0}^{-1}]$ then $z_\pm\in\mathbb{R}-\{-1,1\}$.
Plugging in we see that
\begin{eqnarray}
\frac{\partial^{2}f}{\partial z^{2}}\Bigg|_{z=z_{\pm}} & = & \frac{(1-\lambda^{2})(1-z_{\pm}^{2})\lambda}{z_{\pm}(z_{\pm}-\lambda)^{2}(1-\lambda z_{\pm})^{2}}.\label{eq:scd_deriv}
\end{eqnarray}
If $a\in\left\{ \alpha_{0},\,\alpha_{0}^{-1}\right\} $ then $\frac{\partial f}{\partial z}$
has a unique zero. If $a=\alpha_{0}^{-1}$ then $z_{+}=z_{-}=1=\tilde{z}_{0}$
and
\[
f(1,\alpha_{0}^{-1})=\frac{\partial f}{\partial z}(1,\alpha_{0}^{-1})=\frac{\partial^{2}f}{\partial z^{2}}(1,\alpha_{0}^{-1})=0,
\]
with
\[
\frac{\partial^{3}f}{\partial z^{3}}(1,\alpha_{0}^{-1})=-\frac{2\lambda(1+\lambda)}{(1-\lambda)^{3}}\neq0.
\]
If $a=\alpha_{0}$ then $z_{+}=z_{-}=-1=z_{0}$ and
\[
\frac{\partial f}{\partial z}(-1,\alpha_{0})=\frac{\partial^{2}f}{\partial z^{2}}(-1,\alpha_{0})=0,\qquad\frac{\partial^{3}f}{\partial z^{3}}(-1,\alpha_{0})=-\frac{2\lambda(1-\lambda)}{(1+\lambda)^{3}}\neq0.
\]
\end{proof}
The lemma shows that the location of stationary points of $f_a$ in $\mathbb{C}$ is determined by the location of $a=\frac{k}{n}$ relative to the critical interval $[\alpha_0,\alpha_0^{-1}]$. As $a$ approaches the boundary the stationary points degenerate. Thus we treat the situations, where $a$ is separated from the boundary and where $a$ approaches the boundary individually. The former scenario corresponds to point~\emph{5)} in Proposition~\ref{thm:main}, i.e.~there is a $\beta\in(\alpha_0,1)$ that separates $a$ from the boundary, $a\in(\beta,\beta^{-1})$. In the second scenario, where $a$ approaches the boundary, the asymptotic behavior depends on the speed at which $a$ approaches the boundary. This is reflected in the points \emph{2), 3), 4)} from Proposition~\ref{thm:main}. Speaking roughly we employ the following methods to determine the asymptotics.
\begin{itemize}
\item If $a$ is separated from the boundary then the stationary points $z_{\pm}$ of $f_a$ belong
to the contour of integration $\partial\mathbb{D}$. Since $\Abs{z^{k/n}\frac{1-\lambda z}{z-\lambda}}=1$
for any $z\in\partial\mathbb{D}$ we can introduce the real
function
\[
h(\varphi)=h_{a}(\varphi):=-if(e^{i\varphi},a)\qquad\varphi\in[0,\pi],
\]
to write the integral as a generalized Fourier integral,
\begin{align*}
\overline{\widehat{(1-z^{2})b_{\lambda}^{n}}(k)}& =\frac{1}{2\pi}\int_{-\pi}^{\pi}g(z)e^{nf_a(z)}\Big|_{z=e^{i\varphi}}\d\varphi\nonumber 
=\frac{1}{\pi}\Re{\left\{ \int_{0}^{\pi}g(z)e^{inh_a(z)}\Big|_{z=e^{i\varphi}}\d\varphi\right\} }.
\end{align*}
Since $a$ is separated from the critical boundary we have
\begin{equation*}
\min_{a\in[\beta,\beta^{-1}]}\Abs{\frac{\partial^{2}h_{a}}{\partial \varphi^{2}}(\varphi_{a},a)}>0,
\end{equation*}
where $\varphi_{a}$ is the unique critical point of $h_{a}$ in $(0,\pi)$.
To determine the asymptotic behavior of this integral we will rely on the \textit{method of stationary phase}~\cite{EA}.
\item When $a$ approaches the critical boundary we are faced with coalescing saddle points. If $a$ approaches the boundary from the inside the two saddle points $z_\pm$ remain on $\partial\mathbb{D}$. However, when $a$ approaches the boundary from the outside the saddle points $z_\pm$ move along the real line. While in the former situation we can rely on a modified version of the method of stationary phase, in the latter case we first deform the contour of integration such that is passes through $z_\pm$.
The asymptotic behavior in this situation is determined using the~\emph{method of steepest descents}. To capture the asymptotic behavior we also take account of the speed at which $a$ approaches the boundary. To achieve this we employ a~\emph{uniform} version of the method of stationary phase/ steepest descents as is introduced in~\cite{CFU}.
\end{itemize}

\begin{proof}[Proof of Proposition 9, Point 5)]
To determine an asymptotic upper estimate we suppose a situation where $a$ is \emph{fixed} in an interval $K\subset[\beta,\beta^{-1}]$. The stationary points of $h=h_{a}$ are given by (see Lemma \ref{lem:critical_pts_f} point 1))
\[
z_{+,-}=\frac{a(1+\lambda^{2})-(1-\lambda^{2})}{2\lambda a}\pm i\sqrt{1-\left(\frac{a(1+\lambda^{2})-(1-\lambda^{2})}{2\lambda a}\right)^{2}}\in\partial\mathbb{D}
\]
and we write $z_{+,-}=e^{i\varphi_{+,-}}$ with $\varphi_{+}\in[0,\pi]$
and $\varphi_{-}\in(-\pi,0]$. Only $z_{+}$ is interesting because
we integrate over $[0,\pi]$. For the second derivative we have that
\[
ih''(\varphi)=\frac{\partial}{\partial\varphi}\left(\frac{\partial f}{\partial z}\frac{dz}{d\varphi}\right)=\frac{\partial^{2}f}{\partial z^{2}}\left(\frac{dz}{d\varphi}\right)^{2}+\frac{\partial f}{\partial z}\frac{d^{2}z}{(d\varphi)^{2}}.
\]
It follows from \eqref{eq:scd_deriv} that
\begin{eqnarray*}
i\frac{\partial^{2}h}{\partial\varphi^{2}}\Bigg|_{\varphi=\varphi_{+}} & = &-z_{+}^{2} \frac{(1-\lambda^{2})(1-z_{+}^{2})\lambda}{z_{+}(z_{+}-\lambda)^{2}(1-\lambda z_{+})^{2}}
\end{eqnarray*}
so that $h''(\varphi_{+})>0$. To find the asymptotics we apply a
standard result by A. Erdélyi~\cite[Theorem 4]{EA} (see also F.
Olver \cite[Theorem 1]{OF} for a more explicit form), which however
requires that the stationary point is an endpoint of the interval
of integration. Hence we begin by splitting
\[
\int_{0}^{\pi}g(\varphi)e^{inh(\varphi)}\d\varphi=\int_{0}^{\varphi_{+}}g(\varphi)e^{inh(\varphi)}\d\varphi+\int_{\varphi_{+}}^{\pi}g(\varphi)e^{inh(\varphi)}\d\varphi.
\]
For the second integral \cite[Theorem 4]{EA} (see also \cite[Theorem 1]{OF})
gives
\begin{align*}
 \int_{\varphi_{+}}^{\pi}g(\varphi)e^{inh(\varphi)}\d\varphi
 &=\frac{1}{2}\Gamma(1/2)k(0)e^{i\frac{\pi}{4}}n^{-1/2}e^{inh(\varphi_{+})}
 +\frac{1}{2}\Gamma(1)k'(0)e^{i\frac{\pi}{2}}n^{-1}e^{inh(\varphi_{+})}\\
 & -\frac{i}{n}e^{inh(\pi)}\frac{g(\pi)}{h'(\pi)}+\cO\left(n^{3/2}\right),
\end{align*}
with
\[
k(0)=2^{1/2}g(\varphi_{+})\left(h''(\varphi_{+})\right)^{-1/2}
\]
and
\[
k'(0)=\frac{2}{h''(\varphi_{+})}g'(\varphi_{+})-\frac{2}{h''(\varphi_{+})}\frac{h^{(3)}(\varphi_{+})}{3h''(\varphi_{+})}g(\varphi_{+}).
\]
For the first integral $\int_{0}^{\varphi_{+}}g(\varphi)e^{inh(\varphi)}\d\varphi$
we change the variable of integration $\varphi\mapsto-\varphi$ as
suggested in \cite[p.~23]{EA}. We get
\[
\int_{0}^{\varphi_{+}}g(\varphi)e^{inh(\varphi)}\d\varphi=\int_{-\varphi_{+}}^{0}g(-\varphi)e^{inh(-\varphi)}\d\varphi.
\]
Applying \cite[Theorem 4]{EA} (see also \cite[Theorem 1]{OF}) gives
\begin{align*}
 & \int_{-\varphi_{+}}^{0}g(-\varphi)e^{inh(-\varphi)}\d\varphi
 =\frac{1}{2}\Gamma(1/2)k(0)e^{i\frac{\pi}{4}}n^{-1/2}e^{inh(\varphi_{+})}
 +\frac{1}{2}\Gamma(1)k'(0)e^{i\frac{\pi}{2}}n^{-1}e^{inh(\varphi_{+})}\\
 & -\frac{i}{n}e^{inh(0)}\frac{g(0)}{h'(0)}+\cO\left(n^{3/2}\right)
\end{align*}
with $k(0)=2^{1/2}g(\varphi_{+})\left(h''(\varphi_{+})\right)^{-1/2}$ and
$
k'(0)=-\frac{2}{h''(\varphi_{+})}g'(\varphi_{+})+\frac{2}{h''(\varphi_{+})}\frac{h^{(3)}(\varphi_{+})}{3h''(\varphi_{+})}g(\varphi_{+}).
$
Observing that $h(0)=0$ while $h(\pi)=(a-1)\pi,$ $g(0)=0$ while
$g(\pi)=0,$ and $h'(0)=\frac{(a-1)(1-\lambda)-2\lambda}{1-\lambda}$
while $h'(\pi)=-\frac{(a-1)(1+\lambda)+2\lambda}{1+\lambda}$ we get
$
-\frac{i}{n}e^{inh(\pi)}\frac{g(\pi)}{h'(\pi)}=0
$
while
$
-\frac{i}{n}e^{inh(0)}\frac{g(0)}{h'(0)}=0.
$
We conclude that
\begin{align*}
 \int_{0}^{\pi}g(\varphi)e^{inh(\varphi)}\d\varphi
 &=\Gamma(1/2)\left(2^{1/2}g(\varphi_{+})\left(h''(\varphi_{+})\right)^{-1/2}\right)e^{i\frac{\pi}{4}}n^{-1/2}e^{inh(\varphi_{+})}+\cO\left(n^{-3/2}\right)\\
 & =e^{inh(\varphi_{+})+i\frac{\pi}{4}}(1-z_{-}^{2})\left(\frac{2\abs{z_{+}-\lambda}^{4}}{n\lambda(1-\lambda^{2})\abs{1-z_{+}^{2}}}\right)^{1/2}\Gamma(1/2)+\cO\left(n^{-3/2}\right)\\
 & =e^{inh(\varphi_{+})-i\varphi_{+}+i\frac{3\pi}{4}}\frac{(a-\alpha_{0})^{1/4}(\alpha_{0}^{-1}-a)^{1/4}}{\lambda a^{3/2}}\left(\frac{2\pi(1-\lambda^{2})}{n}\right)^{1/2}+\cO\left(n^{-3/2}\right),
\end{align*}
where we made use of
$
\abs{z_{+}-\lambda}=\sqrt{\frac{1-\lambda^{2}}{a}}
$
and
$
2\Im(z_{+})=\abs{z_{+}^{2}-1}=\frac{\sqrt{(a-\alpha_{0})(\alpha_{0}^{-1}-a)}}{\lambda a}.
$
We conclude that for fixed $a$ we have
\begin{align*}
&\frac{1}{\pi}\Re\left\{\int_{0}^{\pi}g(\varphi)e^{inh(\varphi)}\d\varphi\right\}\\
& =\sqrt{\frac{2(1-\lambda^{2})}{\pi n}}\frac{(a-\alpha_{0})^{1/4}(\alpha_{0}^{-1}-a)^{1/4}}{\lambda a^{3/2}}\cos\left(nh(\varphi_{+})-\varphi_{+}+\frac{3\pi}{4}\right)\big(1+O(n^{-1})\big).
\end{align*}
We are interested in an asymptotic bound to $\sup_{k\in[\beta n,\beta^{-1}n]}\Abs{\widehat{(1-z^{2})b_{\lambda}^{n}}(k)}$. If $n$ is large we see from the above formula that the supremum is attained at $k=a^*n$.
\end{proof}
The situation is more complicated when $k$ approaches the boundary of $[\alpha_{0}n,\,\alpha_{0}^{-1}n]$. When $a$ varies in a domain of the complex plane the saddle points $z_\pm$ vary with $a$ and coalesce when a approaches the critical boundary. If $a$ was fixed the method of stationary phase / steepest descents would apply but if $a$ approaches the critical boundary the radius of convergence of the resulting asymptotic expansions goes to $0$. The so-called \emph{uniform method of steepest decent}~\cite{CFU} was developed to provide asymptotic expansions that are uniform in $a$. According to Lemma \ref{lem:critical_pts_f} the integral
\begin{align*}
\frac{1}{2i\pi}\oint_{\partial\mathbb{D}}g(z)\exp\left(nf_{a}(z)\right)\frac{\d z}{z},
\end{align*}
has two simple saddle points when $k$ is separated from the boundary. The saddle points coalesce into a single saddle point of order two as $a$ approaches the boundary. We assume that $k$ approaches the right boundary, $\lim_{n\rightarrow\infty}\frac{k}{n}=\alpha_{0}^{-1}$,
the reasoning for the left one being similar. In this situation the main contribution to the contour integral comes from a small arc around $z=1$. We deform the circle $\partial\mathbb{D}$ such that the new contour of integration $\cC$ passes through the saddle points $z_\pm$. We write $\cD_\pm$ for a small neighborhood containing $z=1$ and $z_\pm$. We have
\begin{align*}
\frac{1}{2i\pi}\oint_{\partial\mathbb{D}}g(z)\exp\left(nf_{a}(z)\right)\frac{\d z}{z}=\frac{1}{2i\pi}\int_{\cC\cap\cD_\pm}g(z)\exp\left(nf_{a}(z)\right)\frac{\d z}{z}+\cO\left(\frac{1}{n}\right).
\end{align*}
This is a consequence of \cite[Lemma 7]{SZ1} where it is shown that for any fixed $\epsilon\in(0,\pi)$
\[
\int_{\partial\mathbb{D}\setminus\mathcal{C}_{\varepsilon}}\exp\left(nf_{a}(z)\right)\frac{\d z}{z}=\cO\left(\frac{1}{n}\right)
\]
where $\cC_\varepsilon=\{z=e^{i\varphi}\:|\:\varphi\in(-\varepsilon,\varepsilon)\}$.
To simplify the dependence of the saddle points on $a$ we change the variable of integration via a locally one-to-one transformation of the form
\[
f(z,a)=-\frac{t^{3}}{3}+\gamma^{2}t+\rho.
\]
The parameters $\gamma$ and $\rho$ are determined such that $t=0$ is mapped to $z=1$ and the saddle points $z_\pm$ are mapped symmetrically to $t=\pm\gamma$. The below proposition is from~\cite{CFU} stated in the formulation of~\cite[Theorem 1 p.~368]{WR}.

\begin{prop}[\cite{CFU}] \label{cubic} For $a$ near $\alpha_{0}^{-1}$
the cubic transformation
\[
f(z,a)=-\frac{t^{3}}{3}+\gamma^{2}t
\]
with $$\gamma^{2}=\frac{(a-\alpha_{0}^{-1})(1-\lambda)}{\left(\lambda(1+\lambda)\right)^{1/3}}+o((a-\alpha_{0}^{-1}))$$
has exactly one branch $t=t(z,a)$ that can be expanded into a power
series in $z$ with coefficients that are continuous in $a$. On
this branch the points $z=z_{\pm}$ correspond to $t=\pm\gamma$.
The mapping of $z$ to $t$ is one-to-one on a small neighborhood $\cD_{\pm}$ of $z=1$ containing $z_{+}$ and $z_{-}$. \end{prop}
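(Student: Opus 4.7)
The plan is to follow the Chester--Friedman--Ursell strategy, separating the determination of $\gamma(a)$ from the construction of the branch $t=t(z,a)$.

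First I would fix $\gamma(a)$ by the saddle-matching condition: requiring $t(z_\pm(a),a) = \pm\gamma(a)$ together with $f(z,a) = -t^3/3 + \gamma^2(a)\,t$ forces $f(z_\pm(a),a) = \pm\tfrac{2}{3}\gamma^3(a)$. Since $f(1,a) = \log 1 = 0$ for every $a$, subtracting these two relations yields
\[
\gamma^3(a) = \tfrac{3}{4}\bigl(f(z_+(a),a) - f(z_-(a),a)\bigr).
\]
Using Lemma~\ref{lem:critical_pts_f} together with the Taylor expansion of $f$ around $(1,\alpha_0^{-1})$, where $\partial_z^3 f(1,\alpha_0^{-1}) = -\frac{2\lambda(1+\lambda)}{(1-\lambda)^3}$ and $z_\pm(a)-1 = O(\sqrt{a-\alpha_0^{-1}})$, a routine asymptotic expansion of the right-hand side produces the announced formula
\[
\gamma^2(a) = \frac{(a-\alpha_0^{-1})(1-\lambda)}{(\lambda(1+\lambda))^{1/3}} + o(a-\alpha_0^{-1}).
\]

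Next I would build $t=t(z,a)$ by comparison of power series at $z=1$. Writing $t=\sum_{n\geq1}c_n(a)(z-1)^n$ and $f(z,a)=\sum_{n\geq1}a_n(a)(z-1)^n$ (there is no constant term because $f(1,a)=0$), the identity $f(z,a) = -t^3/3+\gamma^2(a)t$ produces at each order $n$ a relation of the form
\[
\gamma^2(a)\,c_n(a) = a_n(a) + P_n\bigl(c_1,\dots,c_{n-1}\bigr),
\]
with $P_1=P_2=0$. Dividing by $\gamma^2(a)$ is a priori singular at $a=\alpha_0^{-1}$; the point is that the numerators vanish to the correct order in $(a-\alpha_0^{-1})$ precisely because of the saddle-matching conditions imposed above. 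This cancellation can be verified inductively, or more elegantly by a Weierstrass-preparation factorization $f(z,a) = (z-z_+(a))(z-z_-(a))\,G(z,a)$ with $G$ analytic and nonvanishing on a fixed neighborhood of $(1,\alpha_0^{-1})$, which forces the cubic in $t$ to share the same discriminant structure as $f$ regarded as a function of $z$.

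Convergence of the resulting series and the one-to-one property of the map on a small neighborhood $\cD_{\pm}$ follow from computing the leading coefficient. Indeed, at $a=\alpha_0^{-1}$ the cubic reduces to $f(z,\alpha_0^{-1}) = -t^3/3$, and matching with $f(z,\alpha_0^{-1}) \sim -\frac{\lambda(1+\lambda)}{3(1-\lambda)^3}(z-1)^3$ yields
\[
c_1(\alpha_0^{-1}) = \frac{(\lambda(1+\lambda))^{1/3}}{1-\lambda} \neq 0.
\]
By continuity of $c_1(a)$ we have $\partial_z t(z,a)\neq 0$ on a common neighborhood of $z=1$ for all $a$ close enough to $\alpha_0^{-1}$, so the inverse function theorem delivers the local biholomorphism.

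The main obstacle I would expect is the simultaneous degeneracy at $a=\alpha_0^{-1}$: both $\gamma(a)$ and the saddle separation $z_+(a)-z_-(a)$ vanish there, so the implicit function theorem applied naively to $\chi(z,t,a):= f(z,a) + t^3/3 - \gamma^2(a)\,t$ fails at $(1,0,\alpha_0^{-1})$ because $\partial_t\chi=-t^2+\gamma^2$ vanishes. The CFU preparation argument is designed precisely to sidestep this by exploiting the matching of cubic leading orders; this is where the real technical work lies, and it is what justifies selecting the specific branch of the inverse of the cubic that extends analytically across the coalescence of saddles.
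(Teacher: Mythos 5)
Your strategy is the Chester--Friedman--Ursell saddle-matching argument, which is exactly what the paper uses (the paper does not reprove it; it attributes the statement to [CFU] and refers to the appendix of [SZ1] for the specific expansion of $\gamma^2$). Your derivation of $\gamma^3=\tfrac34\bigl(f(z_+)-f(z_-)\bigr)$ and the resulting formula $\gamma^2=\frac{(a-\alpha_0^{-1})(1-\lambda)}{(\lambda(1+\lambda))^{1/3}}+o(a-\alpha_0^{-1})$ is correct, as is the computation $c_1(\alpha_0^{-1})=\frac{(\lambda(1+\lambda))^{1/3}}{1-\lambda}$, and you correctly locate the real obstacle: the implicit function theorem degenerates at $(z,t,a)=(1,0,\alpha_0^{-1})$ because $\partial_t\chi=-t^2+\gamma^2$ vanishes there.

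Two points deserve correction or sharpening. First, the Weierstrass-preparation factorization should be applied to $\partial_z f$, not to $f$: the coalescing points $z_\pm(a)$ are zeros of $\partial_z f(\cdot,a)$, while $f(\cdot,a)$ itself has only the simple zero $z=1$ (with a triple zero in $z$ precisely at $a=\alpha_0^{-1}$), so the identity $f(z,a)=(z-z_+)(z-z_-)G(z,a)$ you write is false. The relevant factorization is $\partial_z f(z,a)=(z-z_+(a))(z-z_-(a))\,G(z,a)$ with $G$ nonvanishing near $(1,\alpha_0^{-1})$, and on the $t$-side it is matched against $\partial_t\bigl(-t^3/3+\gamma^2 t\bigr)=-(t-\gamma)(t+\gamma)$; this is what drives the cancellation in your power-series recursion $\gamma^2 c_n=a_n+P_n(c_1,\dots,c_{n-1})$. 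Second, the claim that the cancellation at each order $n$ ``follows from the saddle-matching conditions'' and that continuity of $c_1(a)$ alone yields a uniform one-to-one neighborhood is exactly the non-trivial content of the CFU theorem; verifying the cancellations inductively and establishing that the branch $t(\cdot,a)$ is analytic on an $a$-independent neighborhood with jointly continuous coefficients is the technical core, which your sketch acknowledges but does not carry out. Since the paper likewise defers this part to \cite{CFU} and to the appendix of \cite{SZ1}, your proposal is at the same level of completeness as the paper's own treatment, modulo the $f$ versus $\partial_z f$ slip.
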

This is proved in \cite[Appendix]{SZ1} for $a$ near $\alpha_{0}^{-1}$ and $a<\alpha_{0}^{-1}$. We omit the corresponding proof for  $a>\alpha_{0}^{-1}$, which is similar. With the results of~\cite{CFU} we obtain a uniform expansion of the integral in terms of the Airy function $Ai$. For real arguments the latter can be defined as an improper Riemann
integral
\[
Ai(x)=\frac{1}{\pi}\int_{0}^{\infty}\cos\left(\frac{t^{3}}{3}+xt\right)\d t.
\]
For large negative arguments the Airy function shows oscillatory behavior
\[
Ai(-x)\sim\frac{1}{x^{1/4}\sqrt{\pi}}\cos\left(\frac{2}{3}x^{3/2}-\frac{\pi}{4}\right),\ Ai'(-x)\sim\frac{x^{1/4}}{\sqrt{\pi}}\sin\left(\frac{2}{3}x^{3/2}-\frac{\pi}{4}\right),\  x\rightarrow+\infty,
\]
and exponential behavior for large positive arguments
\[
Ai(x)\sim\frac{1}{2x^{1/4}\sqrt{\pi}}\exp\left(-\frac{2}{3}x^{3/2}\right),\ 
Ai'(x)\sim-\frac{x^{1/4}}{2\sqrt{\pi}}\exp\left(-\frac{2}{3}x^{3/2}\right),\ x\rightarrow+\infty.
\]

To determine the asymptotics of the integral in a first step we apply the transformation of Proposition \ref{cubic}. We adopt the notation $\hat{\cD}_{\pm}$ from \cite{BlHa} for the image
of $\cD_{\pm}$ under the transformation $z\mapsto t(z)$. Similarly $\hat\cC$ is the image of $\cC$. We have
\[
\frac{1}{2i\pi}\int_{\cC\cap\cD_\pm}g(z)\exp\left(nf_{a}(z)\right)\frac{\d z}{z}=\frac{1}{2i\pi}\int_{\hat\cC\cap\hat{D}_{\pm}}G_{0}(t)\exp\left(n\left(-\frac{t^{3}}{3}+\gamma^{2}t\right)\right)dt
\]
with
\[
G_{0}(t)=\psi(z(t))\frac{dz}{dt},\qquad\psi(z)=\frac{1-z^{-2}}{z},
\]
which is regular in $\hat{\cD}_{\pm}$. We exploit the fact that when the
integrand vanishes near a critical point, the contribution to asymptotics
from that point is diminished. Thus we expand
\[
G_{0}(t)=A_{0}+A_{1}t+(t^{2}-\gamma^{2})H_{0}(t).
\]
As long as $H_{0}$ is regular in $\hat{\cD}_{\pm}$ the last term of the
above identity vanishes at the two saddle points $t=\pm\gamma$. We
can then determine $A_{0},$ $A_{1}$ by setting $t=\pm\gamma$ in the above to get
\[
A_{0}=\frac{G_{0}(\gamma)+G_{0}(-\gamma)}{2},\:A_{1}=\frac{G_{0}(\gamma)-G_{0}(-\gamma)}{2\gamma}.
\]
With $A_{0},$ $A_{1}$ so determined it is shown in \cite{BlHa}
that $H_{0}=\frac{G_{0}(t)-A_{0}-A_{1}t}{t^{2}-\gamma^{2}}$ is regular in $\hat{\cD}_{\pm}$ as desired. We conclude
\begin{equation*}
\frac{1}{2i\pi}\int_{\cC\cap\cD_\pm}g(z)\exp\left(nf_{a}(z)\right)\frac{\d z}{z}\sim\frac{1}{2i\pi}\int_{\hat\cC\cap\hat{\cD}_{\pm}}(A_{0}+A_{1}t)\exp\left(n\left(-\frac{t^{3}}{3}+\gamma^{2}t\right)\right)dt+R_{0}(n)
\end{equation*}
with $R_{0}(n)=\int_{\hat\cC\cap\hat{\cD}_{\pm}}(t^{2}-\gamma^{2})H_{0}(t)\exp\left(n\left(-\frac{t^{3}}{3}+\gamma^{2}t\right)\right)dt$.
Following the arguments of \cite[371-373]{BlHa}
we rewrite the integral replacing $\hat{\cC}\cap\hat{\cD}_{\pm}$ with the asymptotically equivalent $C_{1}\cap\hat{\cD}_{\pm}$,
where $C_{1}$ starts at infinity with argument $-2\pi/3$
and ends at infinity with argument $2\pi/3$.
The introduced error is negligible, asymptotically vanishing faster than the integral.
With the Airy function we write
\[
Ai(x)=\frac{1}{2i\pi}\int_{C_{1}}\exp\left(\left(-\frac{s^{3}}{3}+sx\right)\right)ds=\frac{2}{\pi}\int_{-\infty}^{+\infty}\cos\left(\frac{\tau^{3}}{3}+\tau x\right)d\tau
\]
in order to express the integral in terms of $Ai(x)$ and its derivative
$Ai'(x)$. To determine $R_{0}$ we integrate by parts and introduce
another asymptotically negligible error by ignoring the boundary terms.
We find
\[
\frac{1}{2i\pi}\int_{\cC\cap\cD_\pm}g(z)\exp\left(nf_{a}(z)\right)\frac{\d z}{z}\sim\frac{A_{0}}{n^{1/3}}Ai(n^{2/3}\gamma^{2})+\frac{A_{1}}{n^{2/3}}Ai'(n^{2/3}\gamma^{2})+R_{1}(n)
\]
with
\[
R_{1}(n)=\frac{1}{n}\int_{\hat\cC\cap\hat{\cD}_{\pm}}G_{1}(t)\exp\left(n\left(-\frac{t^{3}}{3}+\gamma^{2}t\right)\right)dt,\qquad G_{1}(t)=H_{0}'(t).
\]
Repeating the same procedure for $\int_{C_{1}\cap\hat{D}_{\pm}}G_{1}(t)\exp\left(n\left(-\frac{t^{3}}{3}+\gamma^{2}t\right)\right)dt$, i.e.~with $G_{0}$ replaced by $G_{1}$ we find that $R_{1}(n)$ can be neglected
\[
\frac{1}{2i\pi}\oint_{\partial\mathbb{D}}g(z)\exp\left(nf_{a}(z)\right)\frac{\d z}{z}\sim\frac{A_{0}}{n^{1/3}}Ai(n^{2/3}\gamma^{2})+\frac{A_{1}}{n^{3/3}}Ai'(n^{2/3}\gamma^{2}),\qquad n\rightarrow\infty.
\]
To estimate $A_{0}$ and $A_{1}$ it remains to explicit
\[
G_{0}(\pm\gamma)=G_{0}(t_{\pm})=z_{\pm}^{-2}(z_{\mp}-z_{\pm})z'(t_{\pm}).
\]
Observe that as $k/n$ approaches $\alpha_{0}^{-1}$ we have $\gamma=\cO\left(\sqrt{\abs{\alpha_{0}^{-1}-\frac{k}{n}}}\right)$
and $G_{0}(\pm\gamma)=\cO\left(\sqrt{\abs{\alpha_{0}^{-1}-\frac{k}{n}}}\right)$
because
\[
\abs{z_{\mp}-z_{\pm}}=\frac{1-\lambda^{2}}{\lambda(k/n)}\sqrt{\abs{\left(k/n-\alpha_{0}\right)\left(\alpha_{0}^{-1}-k/n\right)}}.
\]
In particular $A_{0}=\cO\left(\sqrt{\abs{\alpha_{0}^{-1}-\frac{k}{n}}}\right)$
and $A_{1}\asymp1$. With these expansion at hand we are ready to conclude the proof of Proposition~\ref{thm:main}.
\begin{proof}[Proof of Proposition~\ref{thm:main}, points 2), 3), 4)]
We rely on the representation in terms of the Airy function:
\begin{enumerate}
\setcounter{enumi}{1}
\item  If $k/n>\alpha_{0}^{-1}+n^{-2/3}$ and $\lim_{n}k/n=\alpha_{0}^{-1}$
then $n^{2/3}\gamma^{2}\rightarrow+\infty$ as $n$ tends to $\infty$
and since $Ai(x)\sim\frac{1}{2x^{1/4}\sqrt{\pi}}\exp\left(-\frac{2}{3}x^{3/2}\right)$ as
$x\rightarrow+\infty$ we find on the one hand
\begin{align*}
\Abs{A_{0}\frac{Ai(n^{2/3}\gamma^{2})}{n^{1/3}}} 
 & \lesssim\frac{\left(\frac{k}{n}-\alpha_{0}^{-1}\right)^{1/4}}{n^{1/2}}\exp\left(-\frac{2}{3}n\left(\frac{k}{n}-\alpha_{0}^{-1}\right)^{3/2}\right)
\end{align*}
and since $Ai'(x)\sim-\frac{x^{1/4}}{2\sqrt{\pi}}\exp\left(-\frac{2}{3}x^{3/2}\right)$
as $x\rightarrow+\infty$ we find on the other hand
\[
\Abs{A_{1}\frac{Ai'(n^{2/3}\gamma^{2})}{n^{2/3}}}\lesssim\frac{n^{1/6}(\frac{k}{n}-\alpha_{0}^{-1})^{1/4}}{n^{2/3}}\exp\left(-\frac{2}{3}n\left(\frac{k}{n}-\alpha_{0}^{-1}\right)^{3/2}\right)
\]
which completes the proof.

\item If $k/n\in(\alpha_{0}^{-1}-n^{-2/3},\alpha_{0}^{-1}+n^{-2/3}]$
then $n^{2/3}\gamma^{2}$ is bounded in $n$ and $\abs{z_{\mp}-z_{\pm}}=\cO\left(\sqrt{\abs{\alpha_{0}^{-1}-\frac{k}{n}}}\right)$
and therefore $\frac{Ai(n^{2/3}\gamma^{2})}{n^{1/3}}A_{0}=\cO(n^{-1/3-1/3})=\cO(n^{-2/3})$
and $\frac{Ai'(n^{2/3}\gamma^{2})}{n^{2/3}}A_{1}=\cO(n^{-2/3})$.

\item If $k/n<\alpha_{0}^{-1}-n^{-2/3}$ and $\lim_{n}k/n=\alpha_{0}^{-1}$
then either $n^{2/3}\gamma^{2}$ is bounded and we refer to point (3) above, or $n^{2/3}\gamma^{2}\rightarrow-\infty$ as $n$ tends to $\infty$
and since $Ai(-x)\sim\frac{1}{x^{1/4}\sqrt{\pi}}\cos\left(\frac{2}{3}x^{3/2}-\frac{\pi}{4}\right)$
as $x\rightarrow+\infty$ we find on one hand
\[
\Abs{\frac{Ai(n^{2/3}\gamma^{2})}{n^{1/3}}A_{0}}\lesssim\frac{1}{n^{1/6}}\frac{1}{(\alpha_{0}^{-1}-\frac{k}{n})^{1/4}}\frac{\left(\alpha_{0}^{-1}-\frac{k}{n}\right)^{1/2}}{n^{1/3}}
\]
because $(\alpha_{0}^{-1}-\frac{k}{n})^{1/4}\rightarrow0$ as $n$
tends to $\infty$, and on the other hand with $Ai'(-x)\sim\frac{x^{1/4}}{\sqrt{\pi}}\sin\left(\frac{2}{3}x^{3/2}-\frac{\pi}{4}\right)$
as $x\rightarrow+\infty$ we find
\[
\Abs{\frac{Ai'(n^{2/3}\gamma^{2})}{n^{2/3}}A_{1}}\lesssim\frac{n^{1/6}(\alpha_{0}^{-1}-\frac{k}{n})^{1/4}}{n^{2/3}}.
\]
\end{enumerate}
\end{proof}
We conclude this section with the desired upper estimate on the $l_\infty^A$-norm of $(1-z^{2})b_{\lambda}^{n}$.


\begin{thebibliography}{BDFKK}


{\normalsize{\bibitem[AJ1]{AJ1} Andersson J.,}}\textit{\normalsize{ Turán's problem 10 revisited,}}{\normalsize{ preprint arXiv:math/0609271, (2008).}}{\normalsize \par}

{\normalsize{\bibitem[AJ2]{AJ2}  Andersson J.,}}\textit{\normalsize{ On some power sum problems of Turán and Erdös,}}{\normalsize{ Acta
Math. Hungar. 70:4, 305--316, (1996)}}{\normalsize \par}

{\normalsize{\bibitem[BlHa]{BlHa} Bleistein N., Handelsman R.A.,
}}\textit{\normalsize{Asymptotic Expansions of Integrals}}{\normalsize{,
Dover Publications, Inc., New York, second edition, (1986).}}{\normalsize \par}







{\normalsize{\bibitem[CFU]{CFU} Chester C., Friedman B.,  Ursell F.,
}}\textit{\normalsize{An extension of the method of steepest descents}}{\normalsize{,
Mathematical Proceedings of the Cambridge Philosophical Society 53:3, 599--611, (1957).}}{\normalsize \par}

{\normalsize{\bibitem[DS]{DS} Davies E.B. and Simon B., }}\textit{\normalsize{Eigenvalue
estimates for non-normal matrices and the zeros of random orthogonal
polynomials on the unit circle}}{\normalsize{, J. Approx. Theory 141:2,
189--213 (2006).}}{\normalsize \par}

{\normalsize{\bibitem[ER]{ER} Erdös P.,  Rényi A., }}\textit{\normalsize{A
probabilistic approach to problems of Diophantine approximation,}}{\normalsize{
lllinois J. Math. 1, 303--315, (1957).}}{\normalsize \par}

{\normalsize{\bibitem[EA]{EA} Erdélyi A., }}\textit{\normalsize{Asymptotic
representations of Fourier integrals and the method of stationary
phase}}{\normalsize{, J. Soc. Indust. Appl. Math. 3, 17--27 (1955).}}{\normalsize \par}

{\normalsize{\bibitem[FF]{FF} Foia\c{s} C., Frazho A.E., }}\textit{\normalsize{The Commutant Lifting Approach to Interpolation Problems}}{\normalsize{, Birkhäuser, Basel, (1990).}}{\normalsize \par}

{\normalsize{\bibitem[GJ]{GJ} Garnett J., }}\textit{\normalsize{Bounded
Analytic Functions}}{\normalsize{, Academic Press, New York (1981).}}{\normalsize \par}



{\normalsize{\bibitem[GMP]{GMP} Gluskin E., Meyer M., Pajor A., }}\textit{\normalsize{Zeros
of analytic functions and norms of inverse matrices}}{\normalsize{,
Israel J. Math. 87, 225--242 (1994).}}{\normalsize \par}










{\normalsize{\bibitem[MH]{MH} Montgomery H.L., }}\textit{\normalsize{Ten
lectures on the interface between analytic number theory and harmonic
analysis}}{\normalsize{, AMS, coll. CBMS Regional Conference Series in Mathematics. 84, (1994).}}{\normalsize \par}


{\normalsize{\bibitem[NF]{NF} Nagy B., Foia\c{s} C., }}\textit{\normalsize{The lifting theorem for intertwining operators and some new applications}}{\normalsize{, Indiana Univ. math. J. 20,  901--904 (1971).}}{\normalsize \par}


{\normalsize{\bibitem[NF1]{NF1} Nagy B.,  Foia\c{s} C., }}\textit{\normalsize{Commutants de certains opérateurs}}{\normalsize{, Act. Sci. Math. 29, 1--17, (1968).}}{\normalsize \par}


{\normalsize{\bibitem[NFBK]{NFBK}  Nagy B., Foia\c{s} C., Bercovici H., K\'erchy L., }}\textit{\normalsize{Harmonic Analysis of Operators on Hilbert Spaces}}{\normalsize{, Springer, (2010).}}{\normalsize \par}



{\normalsize{\bibitem[NN1]{NN1} Nikolski N., }}\textit{\normalsize{Condition
numbers of large matrices and analytic capacities,}}{\normalsize{
St. Petersburg Math. J. 17, 641--682 (2006).}}{\normalsize \par}

{\normalsize{\bibitem[NN2]{NN2}  Nikolski N., }}\textit{\normalsize{Operators,
Function, and Systems: an Easy Reading}}{\normalsize{, Vol.1, Amer.
Math. Soc. Monographs and Surveys (2002).}}{\normalsize \par}

{\normalsize{\bibitem[NN3]{NN3}  Nikolski N., }}\textit{\normalsize{Treatise
on the Shift Operator}}{\normalsize{, Springer-Verlag, Berlin etc.,
1986 (Transl. from Russian, }}\textit{\normalsize{Lekzii ob operatore
sdviga}}{\normalsize{, ``Nauja'', Moskva (1980)).}}{\normalsize \par}



{\normalsize{\bibitem[OF]{OF} Olver F.W.J., }}\textit{\normalsize{Error
bounds for stationary phase approximations}}{\normalsize{, SIAM J.
Math. Anal. 5, 19--29 (1974).}}{\normalsize \par}


{\normalsize{\bibitem[QH]{QH} Queffelec H., }}\textit{\normalsize{Sur
un théorème de Gluskin-Meyer-Pajor,}}{\normalsize{ C. R. Acad. Sci.
Paris Sér. 1 Math. 317, 155--158 (1993).}}{\normalsize \par}

{\normalsize{\bibitem[SJ]{SJ} Schäffer J.J., }}\textit{\normalsize{Norms
and determinants of linear mappings,}}{\normalsize{ Math. Z. 118,
331--339 (1970).}}{\normalsize \par}


{\normalsize{\bibitem[SO1]{SO1} Szehr O.,}}\textit{\normalsize{ Eigenvalue
estimates for the resolvent of a non-normal matrix}}{\normalsize{,
J. Spectr. Theory 4:4, 783--813 (2014).}}{\normalsize \par}

{\normalsize{\bibitem[SO2]{SO2} Szehr O.,}}\textit{\normalsize{ Spectral
methods for quantum Markov chains}}{\normalsize{, Thesis, Technical
University of Munich (2013).}}{\normalsize \par}

{\normalsize{\bibitem[SRB]{SRB} Szehr O., Reeb D., Wolf M.M.,}}\textit{\normalsize{
Spectral convergence bounds for classical and quantum Markov processes}}{\normalsize{,
Commun. Math. Phys. 333:2, 565--595 (2015).}}{\normalsize \par}


{\normalsize{\bibitem[SZ1]{SZ1} Szehr O., Zarouf R., }}\textit{\normalsize{$l_{p}$-norms
of Fourier coefficients of powers of a Blaschke factor}}{\normalsize{,
J. Analyse. Math. (2017) to appear,
ArXiv:1701.02358, (2017).}}{\normalsize \par}

{\normalsize{\bibitem[SZ2]{SZ2} Szehr O., Zarouf R., }}\textit{\normalsize{On
the asymptotic behavior of Jacobi polynomials with varying parameters}}{\normalsize{, ArXiv: 1605.02509, (2016).}}{\normalsize \par}

{\normalsize{\bibitem[TP]{TP} Turán P.,}}\textit{\normalsize{ On
a new method of analysis and its applications}}{\normalsize{, Pure
and Applied Mathematics, New-York (1984).}}{\normalsize \par}



{\normalsize{\bibitem[WR]{WR} Wong R., }}\textit{\normalsize{Asymptotic
Approximations of Integrals}}{\normalsize{, Society for Industrial
and Applied Mathematics, (2001).}}{\normalsize \par}

{\normalsize{\bibitem[ZR]{ZR} Zarouf R., }}\textit{\normalsize{Sharpening
a result by E.B. Davies and B. Simon}}{\normalsize{, C. R. Acad. Sci.
Paris Sér. 1 347:15, 939--942, (2009).}}

\end{thebibliography}
\end{document}